\definecolor{Chocolat}{rgb}{0.36, 0.2, 0.09}
\definecolor{BleuTresFonce}{rgb}{0.215, 0.215, 0.36}
\definecolor{EgyptianBlue}{rgb}{0.06, 0.2, 0.65}
\newtheorem{theorem}{Theorem}[section]
\newtheorem{corollary}[theorem]{Corollary}
\newtheorem{proposition}[theorem]{Proposition}
\newtheorem{conjecture}[theorem]{Conjecture}
\theoremstyle{definition}
\newtheorem{remark}[theorem]{Remark}
\newcommand{\ac}{\scriptstyle \text{\rm !`}}
\DeclareMathAlphabet{\pazocal}{OMS}{zplm}{m}{n}
\def\calA{\pazocal{A}}
\def\calB{\pazocal{B}}
\def\calF{\pazocal{F}}
\def\calG{\pazocal{G}}
\def\calK{\pazocal{K}}
\def\calL{\pazocal{L}}
\def\calO{\pazocal{O}}
\def\calR{\pazocal{R}}
\def\calT{\pazocal{T}}
\DeclareMathOperator{\Lie}{Lie}
\DeclareMathOperator{\Jord}{Jord}
\DeclareMathOperator{\Vect}{{\ensuremath\mathsf{Vect}}}
\DeclareMathAlphabet{\mathbbold}{U}{bbold}{m}{n}
\def\k{\mathbbold{k}}
\begin{document}

\title[On the conjecture of Kashuba and Mathieu about free Jordan algebras]{On the conjecture of Kashuba and Mathieu\\ about free Jordan algebras}

\author{Vladimir Dotsenko}
\address{Institut de Recherche Math\'ematique Avanc\'ee, UMR 7501, Universit\'e de Strasbourg et CNRS, 7 rue Ren\'e-Descartes, 67000 Strasbourg, France}

\email{vdotsenko@unistra.fr}

\author{Irvin Roy Hentzel}
\address{Department of Mathematics, Iowa State University, Ames, Iowa 50011, United States of America}

\email{hentzel@iastate.edu}

\begin{abstract}
Kashuba and Mathieu \cite{MR4235202} proposed a conjecture on vanishing of Lie algebra homology, implying a description of the $GL_d$-module structure of the free $d$-generated Jordan algebra. Their conjecture relies on a functorial version of the Tits--Kantor--Koecher construction that builds Lie algebras out of Jordan algebras. In this note, we summarize new intricate computational data concerning free Jordan algebras and explain why, despite a lot of overwhelmingly positive evidence, the conjecture of Kashuba and Mathieu is not true. 
\end{abstract}

\maketitle

\section{Introduction}

A Jordan algebra is a nonassociative commutative algebra in which the identity
 \[
a(a^2b)=a^2(ab)     
 \] 
holds; in other words, for any element $a$, multiplications by $a$ and by $a^2$ commute. The original motivation for Jordan algebras came from a wish to introduce a new algebraic formalism for quantum mechanics \cite{MR1503141}; in particular, one key guiding observation was that in any associative algebra with involution, the subspace of self-adjoint elements is not closed under product but is closed under the symmetrized product $ab:=\frac12(a\cdot b+b\cdot a)$. That latter product is known to satisfy the defining identity of Jordan algebras, but in fact satisfies other identities too \cite{MR0186708}. 
  
A lot of structural results on Jordan algebras are known \cite{MR668355}, yet free Jordan algebras are not really well understood. They have many unexpected properties: for instance, free Jordan algebras on at least $3$ generators contain zero divisors~\cite{MR566779} and free Jordan algebras on at least $32$ generators contain nilpotent elements~\cite{MR788339}. One important aspect of the theory of Jordan algebras is their relationship to Lie algebras via a beautiful construction of Tits \cite{MR0146231}, later generalized by Kantor \cite{MR0175941} and Koecher \cite{MR214631}. Several years ago, Kashuba and Mathieu~\cite{MR4235202} used a functorial version of the Tits--Kantor--Koecher construction \cite{MR1752782,MR3169564} to propose a conjecture that would shed light on free Jordan algebras. Concretely, they conjectured that the homology of any Lie algebra obtained from a \emph{free} Jordan algebra by a functorial version of the Tits--Kantor--Koecher construction, if viewed as an $\mathfrak{sl}_2$-module, contains no trivial or adjoint isotypic components in homological degrees greater than one. This conjecture would imply a description of the $GL_d$-module structure of the free $d$-generated Jordan algebra for each $d\in\mathbb{N}$. A version of that conjecture for free Jordan superalgebras was later proposed by Shang \cite{MR4853483}.

In this note, we use the algorithms of the second author \cite{MR435159,MR463251} to obtain new computational data on free Jordan algebras that overwhelmingly supports this conjecture (it predicts correctly the dimension of the space of elements of degree $n$ in the free Jordan algebra on two generators for $n\le 18$, the dimension of the space of elements of degree $n$ in the free Jordan algebra on three generators for $n\le 12$, and the dimension, as well as the $GL_k$-module structure, of the space of elements of degree $n$ in the free Jordan algebra on $k\ge 4$ generators for $n\le 10$), and, somewhat disappointingly in the light of that data, we disprove the conjecture. In fact, the conjecture fails already for the free Jordan algebra on two generators, predicting the wrong dimension of the space of elements of degree~$19$ in that algebra; this rather surprising discovery followed the discovery of the first author \cite{dotsenko2025conjectureshangfreealternative} that the case of the conjecture of Shang~\cite{shang2025allisonbenkartgaofunctorcyclicityfree} about free alternative algebras, inspired by the conjecture of Kashuba and Mathieu, also fails for two-generated algebras. 
Recall that the results of Cohn \cite{MR60496} and Shirshov \cite{MR75936} allow one to describe the free Jordan algebra on two generators in a completely explicit way, so the prediction can be checked in any degree. It is perhaps unconventional for a mathematics paper explaining that a certain conjecture is false to also offer data in favour of that conjecture. There are two reasons to do so. First, one may say that this choice is informed by the celebrated Feynman's address~\cite{Feyn}, and specifically the part 
\begin{quote}
In summary, the idea is to try to give all of the information to help others to judge the value of your contribution; not just the information that leads to judgment in one particular direction or another.
\end{quote}
Second, it is reasonable to say that now that the conjecture of Kashuba and Mathieu is disproved, the subject of free Jordan algebras is as mysterious as ever. Thus, unlike the paper \cite{dotsenko2025conjectureshangfreealternative}, which was partially written in parallel to the present one and which focuses on several theoretical phenomena and mostly relies on previously available computational data, the present paper additionally presents results of numerous new computations, which may eventually prove instrumental in further studies of free Jordan algebras. 

This paper is organized as follows. In Section \ref{sec:conjecture}, we recall the necessary definitions and the main conjecture. In Section \ref{sec:operads}, we discuss an operadic viewpoint of the functorial Tits--Kantor--Koecher construction and of the conjecture of Kashuba and Mathieu, as well as of its superalgebra version. In Section \ref{sec:counterexamples}, we present new computational results for dimensions and module structures of subspaces of free Jordan algebras, and explain why the conjecture is false for free Jordan algebras and superalgebras with at least two even generators. 

\subsection{Acknowledgements} We thank Frederic Chapoton, Iryna Kashuba, Olivier Mathieu, and Ivan Shestakov for numerous useful discussions. Work of V. D. was supported by the ANR (French national research agency, project ANR-20-CE40-0016) and by Institut Universitaire de France. Some of the computations presented here were performed during the visit of V. D. to the Shenzhen International Center for Mathematics; he thanks this institution for hospitality and excellent working conditions. 

\section{The Tits--Allison--Gao functor and the conjecture of Kashuba and Mathieu}\label{sec:conjecture}

We refer the reader to the monograph \cite{MR2014924} for extensive information on Jordan algebras. Throughout this paper, we use the following notation in any Jordan algebra $J$: $L_g(f)=g\cdot f$, and denote by $\k$ the ground field, which we assume to be of zero characteristic. Recall that for any Jordan algebra $J$ and any $a,b\in J$, the operator $D_{a,b}\in \mathrm{End}(J)$ defined by the formula
 \[
D_{a,b}:=[L_a,L_b] 
 \]
is a derivation of $J$. Since the Jordan axiom means $[L_a,L_{a^2}]=0$, it is easy to see that these operators satisfy the following identities:
\begin{gather}
D_{a,b}+D_{b,a}=0,\label{eq:antisym}\\
D_{ab,c}+D_{bc,a}+D_{ca,b}=0,\label{eq:cyclic}\\
[D,D_{a,b}]=D_{D(a),b}+D_{a,D(b)} \text{ for all } D\in \mathrm{Der}(J).
\end{gather} 
Derivations of this form are called \emph{inner}; they arise as a particular case of the general theory of inner derivations of nonassociative algebras \cite[Sec.~II.3]{MR210757}. The last property shows that inner derivations $\mathrm{Inner}(J)$ form an ideal of the Lie algebra $\mathrm{Der}(J)$ of all derivations of $J$. The celebrated result of Tits \cite{MR0146231} later generalized by Kantor \cite{MR0175941} and Koecher \cite{MR214631}, states that for a Jordan algebra $J$, the vector space
$\mathfrak{sl}_2\otimes J\oplus \mathrm{Inner}(J)$ 
has a Lie algebra structure given by
\begin{gather*}
[x\otimes a,y\otimes b]=
[x,y]\otimes ab+\frac12\mathrm{tr}(xy)D_{a,b},\\
[D_{a,b},x\otimes c]=x\otimes D_{a,b}(c),\\
[D_{a,b},D_{c,d}]=D_{D_{a,b}(c),d}+D_{c,D_{a,b}(d)}.
\end{gather*}
 However, inner derivations are not functorial, and hence this construction does not give a functor from the category of Jordan algebras to the category of Lie algebras. However, this is remedied if one passes to the universal central extension of this latter algebra, described as follows \cite[Th.~4.13]{MR1752782}. Guided by Equations \eqref{eq:antisym} and \eqref{eq:cyclic}, one associates to the given Jordan algebra $J$ the vector space 
 \[
\calB(J):=\Lambda^2(J)/(ab\wedge c+bc\wedge a+ca\wedge b\colon a,b,c\in J)    
 \]
which one can think of as a functorial version of the space of $\mathrm{Inner}(J)$. Furthermore, one defines on the vector space 
 \[
\mathsf{TAG}(J):=\mathfrak{sl}_2\otimes J\oplus\calB(J)   
 \]
a skew-symmetric bracket by
\begin{gather}
\label{eq:TAG1}
[x\otimes a,y\otimes b]=
[x,y]\otimes ab+\frac12\mathrm{tr}(xy) a\wedge b,\\
[a\wedge b,x\otimes c]=x\otimes D_{a,b}(c),\label{eq:TAG2}\\
[a\wedge b,c\wedge d]=D_{a,b}(c)\wedge d+c\wedge D_{a,b}(d).\label{eq:TAG3}
\end{gather}
One can show that these formulas make $\mathsf{TAG}(A)$ into a Lie algebra that one calls the \emph{Tits--Allison--Gao Lie algebra} associated to $J$. Clearly, the construction $\mathsf{TAG}(A)$ is functorial, and produces a Lie algebra in the category of $\mathfrak{sl}_2$-modules having only trivial and adjoint components. Tits proved in \cite{MR0146231} that for each such Lie algebra $\mathfrak{g}$, the multiplicity of the adjoint component has a Jordan algebra structure, which we shall call the \emph{Tits functor}. It is known \cite{MR3169564,MR4235202} that $\mathsf{TAG}$ is the left adjoint of the Tits functor. In \cite{MR4235202}, the following conjecture is proposed. 

\begin{conjecture}[{\cite[Conjecture 2]{MR4235202}}]\label{conj:conj1}
Let $\Jord(V)$ denote the free Jordan algebra generated by a finite-dimensional vector space $V$. The $\mathfrak{sl}_2$-module 
 \[
H_k(\mathsf{TAG}(\Jord(V)),\k)     
 \]
has no trivial or adjoint component for $k>1$. 
\end{conjecture}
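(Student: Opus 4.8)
The plan is to prove the vanishing by exhibiting $\mathsf{TAG}(\Jord(V))$ as a free object in an appropriate category, computing its Lie algebra homology by an operadic resolution, and then isolating the trivial and adjoint $\mathfrak{sl}_2$-isotypic components through a weight argument. First I would record that $\mathsf{TAG}(\Jord(V))$ is generated as a Lie algebra by its adjoint part $\mathfrak{sl}_2\otimes V$: the bracket \eqref{eq:TAG1} already produces both $\mathfrak{sl}_2\otimes(ab)$ and $a\wedge b$ out of degree-one elements, and since $V$ generates $\Jord(V)$ as a Jordan algebra, iterated brackets exhaust $\mathfrak{sl}_2\otimes\Jord(V)$ and $\calB(\Jord(V))$. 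Combined with the adjunction between $\mathsf{TAG}$ and the Tits functor, this identifies $\mathsf{TAG}(\Jord(V))$ with the free algebra on the $\mathfrak{sl}_2$-module $\mathfrak{sl}_2\otimes V$ in the category $\calC$ of Lie algebras internal to $\mathfrak{sl}_2$-modules having only trivial and adjoint isotypic components.

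The category $\calC$ is the category of algebras over a two-coloured operad $\calP$ whose colours are the adjoint (``Jordan'') and trivial (``inner derivation'') components and whose relations are precisely \eqref{eq:antisym}--\eqref{eq:TAG3} together with the Jordan identity, so that $\mathsf{TAG}$ is the free-$\calP$-algebra functor. To compute $H_\bullet(\mathsf{TAG}(\Jord(V)),\k)$ I would use the Chevalley--Eilenberg complex together with the $\mathbb{Z}$-grading coming from the $\mathfrak{sl}_2$-weight, under which $\mathsf{TAG}(J)$ is a short, $3$-term graded Lie algebra with $\Jord(V)$ recovered as the top graded piece. For a free generator this reduces the computation to the operadic bar homology of $\calP$ evaluated on $\mathfrak{sl}_2\otimes V$, the comparison between operadic and Lie homology degenerating precisely because $\mathfrak{sl}_2\otimes V$ is free. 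Crucially, $\mathfrak{sl}_2$-equivariance is preserved throughout, so the bigrading by homological degree and $\mathfrak{sl}_2$-weight passes from this model to $H_\bullet(\mathsf{TAG}(\Jord(V)),\k)$ verbatim.

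Under this identification the conjecture becomes the statement that the weight-$0$ (trivial) and weight-$2$ (adjoint highest-weight) parts of the operadic homology of $\calP$ vanish in homological degrees $k>1$, the degrees $k=0,1$ accounting for the ground field and the generators. I would attempt to prove this by decomposing the space of relations of $\calP$ into $\mathfrak{sl}_2$-isotypic components, computing the induced syzygies in low arity explicitly, and then running an inductive highest-weight argument to show that any class of weight $0$ or $2$ in higher homological degree is forced to be a boundary, that is, a consequence of the arity-two generators alone. Reading off the complementary $\mathfrak{sl}_2$-multiplicities would then yield not only the conjecture but also the predicted $GL_d$-module structure of $\Jord(V)$.

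The main obstacle is concentrated in the last step, and its root lies in the operad $\calP$ itself: because the defining Jordan identity $a(a^2b)=a^2(ab)$ is of arity four, $\calP$ is binary but \emph{not} quadratic, so classical Koszul duality does not apply and the small model of the previous paragraph is genuinely the, a priori infinite, minimal model of a non-Koszul operad. Controlling its trivial and adjoint isotypic components in \emph{every} arity therefore requires a strong homological finiteness theorem asserting that, once passed through the TKK construction, the Jordan identity generates no hidden weight-$0$ or weight-$2$ syzygy beyond homological degree one. Establishing such a vanishing theorem, equivalently ruling out any unexpected invariant or adjoint class in the high-degree Harrison/André--Quillen-type homology of the free Jordan algebra, is the crux on which the entire argument rests, and it is exactly the point at which the delicate behaviour of free Jordan algebras must be confronted head-on.
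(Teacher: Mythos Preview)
Your proposal attempts to \emph{prove} the conjecture, but the paper establishes that the conjecture is \emph{false}. Proposition~\ref{prop:Jord20} shows, by direct computation with the residue criterion of Kashuba--Mathieu, that the prediction of Conjecture~\ref{conj:conj2} (which is implied by Conjecture~\ref{conj:conj1}) for $\dim\Jord(x_1,x_2)_{19}$ is $262658$, whereas the correct value, known from the Cohn--Shirshov description of the free two-generated Jordan algebra as reversible elements in the free associative algebra, is $262656$. Hence Conjecture~\ref{conj:conj1} fails already for $\dim V=2$, and the theorem following Proposition~\ref{prop:Jord20} deduces that it fails for any free Jordan superalgebra with at least two even generators.

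This means the ``main obstacle'' you isolate in your last paragraph is not a technical difficulty to be overcome but a genuine obstruction: the vanishing you are trying to establish simply does not hold. Concretely, your inductive highest-weight argument for showing that every weight-$0$ or weight-$2$ class in homological degree $k>1$ is a boundary must break down, because such classes exist. The non-Koszulity of the Jordan operad that you flag is indeed the mechanism: the hidden syzygies you hope to rule out are present, and the first one visible in the two-generator case appears in degree~$19$. Any argument along the lines you sketch would, if carried out carefully, eventually run into a nontrivial cycle that cannot be bounded, rather than into a proof.
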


It is shown in \cite[Lemma 1]{MR4235202} that there exist unique elements $a(V)$ and $b(V)$ in the augmentation ideal of the Grothendieck ring of $GL(V)$ for which we have the following equalities in the Grothendieck ring of $GL(V)\times PSL_2$:
\begin{gather*}
[\lambda(a(V)[L(2)]+b(V)[L(0)])\colon [L(0)]]=[\k]\\
[\lambda(a(V)[L(2)]+b(V)[L(0)])\colon [L(2)]]=-[V].
\end{gather*}
Here $\lambda$ is the homological $\lambda$-operation satisfying $\lambda(x+y)=\lambda(x)\lambda(y)$ and given on each effective class $x=[U]$ by 
 \[
\lambda(x)=\sum_{k\ge 0}(-1)^k[\Lambda^k(U)].     
 \]

According to \cite[Cor.~1]{MR4235202}, Conjecture \ref{conj:conj1} implies the following conjecture on character formulas for $\Jord(V)$ and $\calB(\Jord(V))$ in the Grothendieck ring of $GL(V)$. 

\begin{conjecture}[{\cite[Conjecture 1]{MR4235202}}]\label{conj:conj2}
Let $\Jord(V)$ denote the free Jordan algebra generated by a finite-dimensional vector space $V$. In the Grothendieck ring of $GL(V)$, we have
 \[
[\Jord(V)]=a(V), \quad [\calB(\Jord(V))=b(V)].     
 \]
\end{conjecture}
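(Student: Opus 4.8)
Since $a(V)$ and $b(V)$ are specified only implicitly, as the unique pair of classes solving the two $\lambda$-equations displayed above, one cannot verify the character formula by evaluating both sides directly: the left-hand side $[\Jord(V)]$ is governed by the Frobenius characteristic of the Jordan operad, which is exactly what remains unknown. My strategy is therefore to route the whole statement through Lie algebra homology, where the implicit description of $a(V),b(V)$ becomes usable. As an $\mathfrak{sl}_2$-module, $\mathsf{TAG}(J)=L(2)\otimes J\oplus L(0)\otimes\calB(J)$, so the Chevalley--Eilenberg complex $\Lambda^\bullet\mathsf{TAG}(\Jord(V))$ yields, in the Grothendieck ring of $GL(V)\times PSL_2$, the Euler characteristic identity
\[
\sum_{k\ge 0}(-1)^k[H_k(\mathsf{TAG}(\Jord(V)),\k)]=\lambda\bigl([\Jord(V)][L(2)]+[\calB(\Jord(V))][L(0)]\bigr).
\]
Projecting onto the $[L(0)]$- and $[L(2)]$-isotypic multiplicities, the right-hand side becomes precisely the two expressions defining $a(V)$ and $b(V)$. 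Hence, by their uniqueness, the character formula $[\Jord(V)]=a(V)$, $[\calB(\Jord(V))]=b(V)$ follows the moment one shows that the $L(0)$- and $L(2)$-multiplicities of the left-hand side equal $[\k]$ and $-[V]$ respectively.

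Next I would dispose of the low-degree homology. One has $H_0=\k$, contributing $[\k]$ to the $L(0)$-component and nothing to $L(2)$. For $H_1=\mathsf{TAG}(\Jord(V))^{\mathrm{ab}}$, the freeness of $\Jord(V)$ --- which gives $\Jord(V)_{\ge 2}=\Jord(V)\cdot\Jord(V)$ --- together with the brackets \eqref{eq:TAG1}--\eqref{eq:TAG3} shows that the commutator subalgebra meets $\mathfrak{sl}_2\otimes\Jord(V)$ in $\mathfrak{sl}_2\otimes\Jord(V)_{\ge 2}$ and contains all of $\calB(\Jord(V))$, so that $H_1=L(2)\otimes V$. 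This contributes $-[V]$ to the $L(2)$-component and nothing to $L(0)$, matching the targets exactly. Consequently the whole statement reduces to showing that the alternating sum $\sum_{k\ge 2}(-1)^k[H_k]$ carries no $L(0)$- and no $L(2)$-isotypic part. I stress that for the character formula one needs only this vanishing of the trivial and adjoint \emph{Euler characteristics} in homological degrees $\ge 2$, a statement a priori weaker than Conjecture \ref{conj:conj1}, which demands the vanishing of each individual $H_k$.

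The substantive step is this high-degree vanishing, and here I would make precise the operadic picture developed in Section \ref{sec:operads}. Because $\mathsf{TAG}$ is left adjoint to the Tits functor and $\Jord(V)$ is free, I would aim to realise $V\mapsto\mathsf{TAG}(\Jord(V))$ as the free-algebra functor, over $V$, for a suitable operad $\calP$ whose algebras are the Lie algebras of Tits--Kantor--Koecher type. The Lie homology of a free $\calP$-algebra is then its operadic (bar) homology, computed by the Koszul dual cooperad $\calP^{\ac}$ evaluated on $V$; the program is to prove that $\calP$ is Koszul, to identify $\calP^{\ac}$ explicitly, and to read off that its trivial and adjoint $\mathfrak{sl}_2$-components occur only in the homological degrees already accounted for by $H_0$ and $H_1$. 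Failing Koszulness, one could instead seek a small functorial resolution of $\k$ over the enveloping algebra $U(\mathsf{TAG}(\Jord(V)))$, or a representation-theoretic cancellation argument organising the $L(0)$- and $L(2)$-multiplicities into a telescoping pattern across homological degrees.

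The main obstacle is precisely this last step. The Jordan operad is not known to be Koszul and admits no tractable resolution, so transporting freeness through $\mathsf{TAG}$ does not by itself deliver a computable $\calP^{\ac}$; controlling the trivial and adjoint multiplicities of $H_k$ for \emph{all} $k\ge 2$ --- even only in the weaker Euler-characteristic form --- appears to require a genuinely new structural handle on free Jordan algebras. I expect this to be the decisive difficulty: the reduction above is formal and robust, whereas the high-degree homological vanishing is where a complete proof must either draw on deep operadic input or break down.
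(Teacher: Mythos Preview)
The statement you are trying to prove is a conjecture, and the paper does not prove it: it \emph{disproves} it. Proposition~\ref{prop:Jord20} shows that the prediction $[\Jord(V)]=a(V)$ already fails for $\dim V=2$ in degree~$19$, where the recursion defining $a(V)$ gives $262658$ while the actual dimension of $\Jord(x_1,x_2)_{19}$ is $262656$. Hence no argument along your lines can succeed, because the conclusion is false.

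Your reduction is nonetheless correct and is precisely the one recorded in \cite[Cor.~1]{MR4235202}: the Euler characteristic of the Chevalley--Eilenberg complex equals $\lambda([\Jord(V)][L(2)]+[\calB(\Jord(V))][L(0)])$, the contributions of $H_0$ and $H_1$ match $[\k]$ and $-[V]$ exactly, and the character formula is equivalent to the vanishing of the $L(0)$- and $L(2)$-parts of $\sum_{k\ge 2}(-1)^k[H_k]$. Your observation that this Euler-characteristic vanishing is strictly weaker than Conjecture~\ref{conj:conj1} is also correct. But the paper's computation shows that even this weaker statement fails: since the character formula is wrong in degree~$19$, the alternating sum over $k\ge 2$ must carry a nonzero $L(2)$-component there. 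So the ``decisive difficulty'' you identify is not merely hard---it is an obstruction. The operadic program you sketch (Koszulness of a putative $\calP$, or a telescoping cancellation) cannot work, and the honest conclusion is that your final paragraph should end not with ``or break down'' as a possibility, but with the recognition that it does break down.
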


The proof of \cite[Lemma 1]{MR4235202} gives an explicit recursive procedure for computing $a(V)$ and $b(V)$ which we implemented in \texttt{SageMath} to compute the predictions of Conjecture \ref{conj:conj2}.  

\section{Operads, algebras and superalgebras}\label{sec:operads}

This section, setting an operad theory context for the conjecture of Kashuba and Mathieu, is closely related to the corresponding section of \cite{dotsenko2025conjectureshangfreealternative}. Let us offer some basic insight to the operad theory for a ring theorist reader, referring to \cite[Sec.~2.3]{MR4675074} for further details and to the monograph \cite{MR2954392} for systematic information on operads. Recall that, to a sequence $\{K(n)\}_{n\ge 0}$, where each $K(n)$ is a right module over the symmetric group $S_n$, one can associate a functor $\calK\colon \Vect\to\Vect$ given by the formula
 \[
\calK(V):= \bigoplus_{n\ge 0}K(n)\otimes_{\k S_n}V^{\otimes n} .   
 \]
Functors like that are called \emph{analytic functors} \cite{MR633783}: their value on $V$ is a ``categorified Taylor series'' with the ``iterated derivatives'' $K(n)$ (one can view the tensoring over $\k S_n$ as the categorical division by $n!$). An important class of analytic functors come from free algebras of various kinds. For instance, if we denote by $\Jord(n)$ the subspace of the free Jordan algebra $\Jord(x_1,\ldots,x_n)$ consisting of all elements of degree exactly one in each generator $x_1,\ldots,x_n$, this vector space has a natural right $S_n$-action (by permutations of the generators $x_1,\ldots,x_n$), and hence the collection of all these spaces 
 \[
\Jord:=\{\Jord(n)\}_{n\ge 1}
 \]
gives rise to an analytic functor. It is easy to see that $\Jord(V)$ can be naturally identified with the free Jordan algebra generated by $V$: indeed, the term 
 \[
 \Jord(n)\otimes_{\k S_n}V^{\otimes n}    
 \]
may be viewed as the result of ``evaluation'' of multilinear Jordan elements on all possible $n$-tuples of elements of $V$; the tensor product over the symmetric group ensures that this evaluation is consistent with simultaneously renumbering elements of the $n$-tuple and permuting the entries of the multilinear Jordan element. Furthermore, the natural map 
 \[
\Jord(\Jord(V))\to\Jord(V)     
 \]
which simply forgets the layered structure of Jordan operations on the left, considered together with the obvious inclusion 
$V\hookrightarrow\Jord(V)$ makes the analytic functor $\Jord$ into a monad; monads like that are called \emph{operads}.

Note that over a field of characteristic zero, every identity is equivalent to a multilinear identity, and in particular the Jordan identity is equivalent to the multilinear identity
 \[
((ab)c)d+
((bd)c)a+
((ad)c)b=
(ab)(cd)+
(ac)(bd)+
(ad)(bc).
 \] 
This allows the reader fluent in the language of operads to present the operad $\Jord$ by means of generators and relations.

For our purposes, it will be important that analytic functors form a symmetric monoidal category with respect to the so called \emph{Cauchy product}; if $\calF=\{F(n)\}_{n\ge 0}$ and $\calG=\{G(n)\}_{n\ge 0}$ are two analytic functors, we may define a new analytic functor $\calF\otimes\calG$ whose $n$-th component is given by  
 \[
\bigoplus_{k+l=n}\mathrm{Ind}_{S_k\times S_l}^{S_n}(F(k)\otimes G(l)).     
 \]
It is easy to check that this product is associative and admits symmetry isomorphisms $\calF\otimes\calG\to \calG\otimes\calF$ satisfying all necessary axioms of a symmetric monoidal category. In particular, one can talk about Lie algebras in this category, which historically are called \emph{twisted Lie algebras} \cite{MR513566}. Concretely, a twisted Lie algebra $\mathfrak{g}$ may be viewed as a Lie algebra of the form $\mathfrak{g}=\bigoplus_{n\ge 0}\mathfrak{g}(n)$, 
where each $\mathfrak{g}(n)$ is a right $\k S_n$-module, and the Lie bracket maps $\mathfrak{g}(n)\otimes \mathfrak{g}(m)$ to $\mathfrak{g}(n+m)$ and is $S_n\times S_m$-equivariant. Clearly, if $\mathfrak{g}$ is a twisted Lie algebra and $V$ is a vector space, then we may view $\mathfrak{g}$ as an analytic functor and obtain the vector space $\mathfrak{g}(V)$; the twisted Lie algebra structure of $\mathfrak{g}$ induces an honest Lie algebra structure on $\mathfrak{g}(V)$. In particular, any Lie algebra $L$ gives rise to a twisted Lie algebra, if one considers the ``constant'' analytic functor 
\[
\mathsf{1}_L(n)=
\begin{cases}
L, \quad n=0,\\
0, \quad n>0.
\end{cases}     
 \]
Another viewpoint on this same definition is that the analytic functor $\mathfrak{g}$ admits an ``action'' of the analytic functor of the Lie operad, that is, a natural transformation $\Lie\circ\mathfrak{g}\to\mathfrak{g}$: indeed, a Lie algebra structure on $\mathfrak{g}(V)$ can be viewed as a map $\Lie(\mathfrak{g}(V))\to \mathfrak{g}(V)$ that is consistent with the operad structure of $\Lie$, and this is natural in $V$. In general, for an operad $\calO$, the notion of a twisted $\calO$-algebra is equivalent to that of a left module over the operad $\calO$. One can also define right modules over an operad, a right $\calO$-module is an analytic functor $\calK$ together with a natural transformation $\calK\circ\calO\to \calK$. Right modules are not algebras, but rather constructions depending on algebras in a way that is stable under algebra endomorphisms, for example, the commutator quotient $A/[A,A]$ of an associative algebra (in the context of PI-algebras, the notion corresponding to that of an operadic right module is known as a $T$-space).

The following result lifts the $\mathsf{TAG}$ construction to the level of twisted Lie algebras, and sheds new light on the functorial properties of that construction. Recall that if $\calR$ is a right $\calO$-module and $\calL$ is a left $\calO$-module, we can form, by analogy with the tensor product of modules over a ring, the relative composition product $\calR\circ_\calO\calL$, see \cite[Sec.~5.1.5]{MR2494775}.

\begin{proposition}\leavevmode
\begin{enumerate}
\item There exists a twisted Lie algebra $\calT\calA\calG$ such that we have a Lie algebra isomorphism 
 \[
 \mathsf{TAG}(\Jord(V))\cong \calT\calA\calG(V).    
 \]
\item The twisted Lie algebra $\calT\calA\calG$ is a right module over the Jordan operad. Moreover, for every Jordan algebra
$J$, we have a natural isomorphism of left $\Lie$-modules
 \[
\mathsf{1}_{\mathsf{TAG}(J)}\cong \calT\calA\calG\circ_{\Jord} \mathsf{1}_A.     
 \]
\end{enumerate} 
\end{proposition}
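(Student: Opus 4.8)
The plan is to observe that the whole $\mathsf{TAG}$ construction, applied to a free Jordan algebra, is assembled from \emph{natural} operations, and is therefore ``defined over the category of species''. Concretely, the functor $V\mapsto\mathfrak{sl}_2\otimes\Jord(V)$ is analytic, being the composite of the analytic functor $\Jord$ with the linear functor $W\mapsto\mathfrak{sl}_2\otimes W$, and $V\mapsto\Lambda^2(\Jord(V))$ is the analytic functor $\Lambda^2\circ\Jord$. The assignment $a\otimes b\otimes c\mapsto ab\wedge c+bc\wedge a+ca\wedge b$ is a natural transformation $\Jord(V)^{\otimes 3}\to\Lambda^2(\Jord(V))$, since it uses only the Jordan product and the wedge; as $\k$ has characteristic zero, the functor $-\otimes_{\k S_n}V^{\otimes n}$ is exact, so the pointwise image of this transformation is again an analytic subfunctor and $V\mapsto\calB(\Jord(V))$ is analytic. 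Taking the direct sum, $V\mapsto\mathsf{TAG}(\Jord(V))$ is analytic; I let $\calT\calA\calG=\{\calT\calA\calG(n)\}_{n\ge 1}$ be the corresponding species, so that $\calT\calA\calG(V)=\mathsf{TAG}(\Jord(V))$ as vector spaces, naturally in $V$.

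For part (1) it remains to transport the Lie bracket. I would use the standard dictionary for analytic functors \cite{MR633783}: natural transformations correspond to morphisms of species, and the pointwise tensor product $V\mapsto\calT\calA\calG(V)\otimes\calT\calA\calG(V)$ is the analytic functor of the Cauchy product $\calT\calA\calG\otimes\calT\calA\calG$. The bracket \eqref{eq:TAG1}--\eqref{eq:TAG3} on $\mathsf{TAG}(J)$ is functorial in the Jordan algebra $J$ --- unfolding $D_{a,b}(c)=a(bc)-b(ac)$, every term uses only the Jordan product, the wedge, and the constant trace form --- so, specialised to $J=\Jord(V)$, it reads as a morphism of species $\calT\calA\calG\otimes\calT\calA\calG\to\calT\calA\calG$, automatically equivariant for $S_k\times S_l$ on the summand of arity $(k,l)$. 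Skew-symmetry and the Jacobi identity become equalities of morphisms of species out of $\calT\calA\calG\otimes\calT\calA\calG$ and $\calT\calA\calG^{\otimes 3}$ respectively; each holds after evaluation on every $V$ (by the cited fact that $\mathsf{TAG}(J)$ is always a Lie algebra), and a morphism of species that vanishes after all evaluations is zero. Hence $\calT\calA\calG$ is a twisted Lie algebra whose induced bracket on $\calT\calA\calG(V)$ is the original one on $\mathsf{TAG}(\Jord(V))$, which proves (1).

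For part (2), the right $\Jord$-module structure on $\calT\calA\calG$ is just ``$\mathsf{TAG}$ applied to the monad structure of $\Jord$'': the structure map $\calT\calA\calG\circ\Jord\to\calT\calA\calG$ evaluates on $V$ to $\mathsf{TAG}(\mu_V)\colon\mathsf{TAG}(\Jord(\Jord(V)))\to\mathsf{TAG}(\Jord(V))$, with $\mu$ the operad composition of $\Jord$; associativity and unitality follow from the monad axioms and functoriality of $\mathsf{TAG}$. This action commutes with the twisted bracket (again, the bracket is functorial in $J$), so $\calT\calA\calG$ is a $(\Lie,\Jord)$-bimodule and $\calT\calA\calG\circ_{\Jord}\mathsf{1}_J$ is a left $\Lie$-module. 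Since $\mathsf{1}_J$ is concentrated in arity $0$, so is $\calT\calA\calG\circ\mathsf{1}_J$, with value $\calT\calA\calG(J)=\mathsf{TAG}(\Jord(J))$, and the relative composition product \cite[Sec.~5.1.5]{MR2494775} is the reflexive coequalizer of
 \[
\mathsf{TAG}(\mu_J),\ \mathsf{TAG}(\Jord(m_J))\colon\ \mathsf{TAG}(\Jord(\Jord(J)))\rightrightarrows\mathsf{TAG}(\Jord(J)),
 \]
where $m_J\colon\Jord(J)\to J$ is the structure map of $J$, i.e.\ the left $\Jord$-action on $\mathsf{1}_J$. Now $\mu_J$ and $\Jord(m_J)$ are morphisms of free Jordan algebras exhibiting $J$ as their coequalizer --- the canonical presentation of an algebra over the monad $\Jord$ --- and $\mathsf{TAG}$, being a left adjoint of the Tits functor, preserves this coequalizer; so the value is $\mathsf{TAG}(J)$, and tracking the Lie structure through the colimit yields $\calT\calA\calG\circ_{\Jord}\mathsf{1}_J\cong\mathsf{1}_{\mathsf{TAG}(J)}$ as left $\Lie$-modules.

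The substance of the argument is in the bookkeeping rather than in any single computation: one has to set up carefully the equivalence between analytic functors and species and verify that it intertwines the pointwise tensor product with the Cauchy product, natural transformations with morphisms of species, composition of functors with the operadic composition product, and reflexive coequalizers on the two sides --- after which the proposition reduces to the three inputs that $\mathsf{TAG}$ is built from natural operations, that $\Jord$ is a monad, and that $\mathsf{TAG}$ is a left adjoint. The one point that genuinely needs care is the quotient defining $\calB$: one must check, using characteristic zero, that the span of the relations $ab\wedge c+bc\wedge a+ca\wedge b$ is a subspecies, so that $\calB\circ\Jord$ is analytic and the bracket descends to it at the level of species.
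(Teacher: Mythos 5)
Your argument is correct in substance. For part (1) you follow essentially the same route as the paper: the paper likewise observes that $\mathfrak{sl}_2\otimes\Jord$ and $\calB(\Jord)$ are analytic (it prefers to define $\calB(\Jord)$ intrinsically as a quotient of the Cauchy exterior square of $\Jord$, rather than via exactness of $-\otimes_{\k S_n}V^{\otimes n}$) and endows $\calT\calA\calG$ with the bracket given by the same formulas \eqref{eq:TAG1}--\eqref{eq:TAG3}, the Lie axioms holding because they hold after every evaluation. For part (2) you genuinely diverge: the paper argues summand-wise, noting that $\Jord$ is tautologically a right $\Jord$-module and $\calB(\Jord)$ is one because it is stable under endomorphisms of free Jordan algebras, and then identifies $\Jord\circ_{\Jord}\mathsf{1}_J\cong\mathsf{1}_J$ and $\calB(\Jord)\circ_{\Jord}\mathsf{1}_J\cong\mathsf{1}_{\calB(J)}$ directly, whereas you define the right action as $\mathsf{TAG}(\mu_V)$ and compute the relative composite from the canonical monadic presentation of $J$ together with the fact that $\mathsf{TAG}$ is a left adjoint. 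Your version is arguably more conceptual and gives the compatibility of the Lie structures with less hands-on checking; the paper's version yields an explicit description of the two summands of the relative composite.

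One step you should make explicit, because as written it conflates two categories: left-adjointness gives you the coequalizer $\mathsf{TAG}(\Jord(\Jord(J)))\rightrightarrows\mathsf{TAG}(\Jord(J))\to\mathsf{TAG}(J)$ in the category of \emph{Lie algebras} (and the presentation you feed into it is a coequalizer in the category of \emph{Jordan algebras}, by monadicity of $\Jord$), whereas $\calT\calA\calG\circ_{\Jord}\mathsf{1}_J$ is by definition a coequalizer of underlying species, i.e.\ of vector spaces. These agree, but for a reason you should state: the pair is reflexive (a common section is $\mathsf{TAG}(\Jord(\eta_J))$), and the forgetful functors from Jordan algebras and from Lie algebras to $\Vect$ create reflexive coequalizers, since the corresponding free-algebra monads preserve them. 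Without such a remark there is a gap, as a vector-space coequalizer of two Lie algebra maps need not coincide with their coequalizer in Lie algebras; note also that you cannot shortcut this by saying the diagram is split in $\Vect$ and applying a functor, since the splittings $\eta$ are not Jordan algebra morphisms and $\mathsf{TAG}(\mu_J)$ is not of the form $\calT\calA\calG(f)$ for a linear map $f$.
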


\begin{proof}
Let $V$ be a finite-dimensional vector space. We know that the free Jordan algebra $\Jord(V)$ can be described as the value of the analytic functor of the Jordan operad on the vector space $V$:
 \[
\Jord(V)\cong\bigoplus_{n\ge 1}\Jord(n)\otimes_{\k S_n}V^{\otimes n}.     
 \]
Moreover, the functor 
 \[
\calB(\Jord(V))\cong \Lambda^2(\Jord(V))/(ab\wedge c+bc\wedge a+ca\wedge b \colon a,b,c\in \Jord(V))     
 \]
is also analytic. Indeed, we may take the obvious functorial version of the definition of $\calB$, looking, for each $n$, at the elements of $\calB(\Jord(x_1,\ldots,x_n))$ consisting of all elements of degree exactly one in each generator $x_1,\ldots,x_n$. Equivalently, one may define 
 \[
B:=\calB(\Jord)= \Lambda^2(\Jord)/(ab\wedge c+bc\wedge a+ca\wedge b \colon a,b,c\in \Jord) ,   
 \]
where the exterior power is now taken with respect to the Cauchy product. We can now define a twisted Lie algebra structure on the analytic functor
 \[
\calT\calA\calG=\mathfrak{sl}_2\otimes\Jord\oplus \calB(\Jord)     
 \]
by the same formulas \eqref{eq:TAG1}--\eqref{eq:TAG3}. We conclude that 
 \[
\mathsf{TAG}(\Jord(V))=\mathfrak{sl}_2\otimes\Jord(V)\oplus \calB(\Jord(V))\cong (\mathfrak{sl}_2\otimes\Jord\oplus \calB(\Jord))(V)   
 \]
is an analytic functor and a twisted Lie algebra (one can even view it as the Tits--Allison--Gao construction applied to the twisted Jordan algebra $\Jord$), and that $\mathsf{TAG}(\Jord(V))\cong \calT\calA\calG(V)$. 

Furthermore, the operad $\Jord$ is tautologically a right $\Jord$-module, and the analytic functor $\calB(\Jord)$ is a right $\Jord$-module since $\calB(\Jord(V)))$ is manifestly stable under all endomorphisms of $\Jord(V)$, so the analytic functor $\calT\calA\calG$ is a right $\Jord$-module. Moreover,  the Lie algebra structure on $\mathsf{TAG}(\Jord(V))$ clearly commutes with all endomorphisms of $\Jord(V)$, so the twisted Lie algebra structure on $\calT\calA\calG$ commutes with the right $\Jord$-module action. It remains to notice that
\begin{gather*}
\Jord \circ_{\Jord} \mathsf{1}_J\cong \mathsf{1}_J,\\
\calB(\Jord)\circ_{\Jord} \mathsf{1}_J\cong \mathsf{1}_{\calB(J)},
\end{gather*}
where the first isomorphism is obvious, and the second follows from the fact that $\calB(\Jord)$ is defined by the same formula as $\calB(J)$, and the relative composition product ``computes'' all the products in $J$ after the evaluation of the analytic functor $\calB(\Jord)(J)$ by taking the corresponding coequalizer. Since the twisted Lie algebra structure on $\calT\calA\calG$ commutes with the right $\Jord$-module action, the induced Lie algebra structure on the relative composite product matches the Tits--Allison--Gao Lie algebra structure on the vector space $\mathsf{TAG}(J)$.
\end{proof}

Let us also outline a functorial viewpoint on Lie algebra homology; for simplicity, we shall focus on the homology with trivial coefficients. In classical textbooks, one would often find the either the definition of the homology as a derived functor
$H_\bullet(L,\k):=\mathrm{Tor}_\bullet^{U(L)}(\k,\k)$,  
or a definition via the Chevalley--Eilenberg complex  
 \[
H_\bullet(L,\k):=H_\bullet(\Lambda(L),d),     
 \]
where in $\Lambda(L)$ we place $\Lambda^k(L)$ in the homological degree $k$, and the differential $d$ is given by the formula
 \[
d(x_1\wedge\cdots\wedge x_k)=\sum_{1\le i< j\le k}(-1)^{i+j-1}[x_i,x_j]\wedge x_1\wedge\cdots\wedge \hat{x}_{i}\cdots\wedge \hat{x}_{j}\wedge\cdots\wedge x_k,     
 \] 
where the notation $\hat{x}_i$ means that this factor must be omitted. While these definitions are completely unambiguous, they disguise one conceptual aspect of the definition. The operad of Lie algebras is Koszul, and its Koszul dual is the operad of commutative associative algebras. This allows us to interpret the homology of a Lie algebra $L$ as the homology of its \emph{bar construction} \cite{MR2954392}. In our particular case, the space of exterior forms $\Lambda(L)$ is really a disguise of $S^c(sL)$, the cofree conilpotent cocommutative coassociative coalgebra on $sL$ (the vector space $L$ homologically shifted by one using an odd element $s$), and $d$ is the unique coderivation of $S^c(sL)$ extending the map 
 \[
S^c(sL)\twoheadrightarrow S^2(sL)\to sL    
 \]
made of the projection onto $S^c(sL)\twoheadrightarrow S^2(sL)$ and the map $S^2(sL)\to sL$ corresponding to the Lie bracket (a skew-symmetric bilinear map $V\times V\to V$ is the same as symmetric bilinear map $sV\times sV\to sV$); note that, similarly how derivations of free algebras are determined by restriction to generators, coderivations of cofree conilpotent coalgebras are determined by corestriction to cogenerators. With that in mind, we obtain the following result.

\begin{proposition}\label{prop:Kunneth}
We have 
 \[
H_\bullet(\mathsf{TAG}(V),\k)\cong  H_\bullet(\calT\calA\calG,\k)(V).    
 \]
\end{proposition}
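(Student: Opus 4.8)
The plan is to deduce the statement from two soft properties of the ``evaluation at $V$'' functor
\[
\mathrm{ev}_V\colon\ \calF=\{F(n)\}_{n\ge 0}\ \longmapsto\ \calF(V)=\bigoplus_{n\ge 0}F(n)\otimes_{\k S_n}V^{\otimes n}
\]
from the category of (differential graded) analytic functors to the category of (differential graded) vector spaces: namely, that $\mathrm{ev}_V$ is \emph{symmetric monoidal} with respect to the Cauchy product on the source and the usual tensor product on the target, and that it is \emph{exact}. Granting these, the argument is entirely formal. By the previous proposition, $\mathsf{TAG}(\Jord(V))\cong\calT\calA\calG(V)=\mathrm{ev}_V(\calT\calA\calG)$ as Lie algebras, so it suffices to show that for any twisted Lie algebra $\mathfrak{g}$ (we shall only use $\mathfrak{g}=\calT\calA\calG$) there is a natural isomorphism $H_\bullet(\mathfrak{g}(V),\k)\cong H_\bullet(\mathfrak{g},\k)(V)$, where on the right-hand side the Chevalley--Eilenberg complex and its homology are formed inside the category of analytic functors and then evaluated at $V$.

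First I would record that $\mathrm{ev}_V$ is symmetric monoidal. Unwinding the definition of the Cauchy product, writing $\mathrm{Ind}_{S_k\times S_l}^{S_n}=\k S_n\otimes_{\k(S_k\times S_l)}(-)$, and using $\mathrm{Res}^{S_n}_{S_k\times S_l}(V^{\otimes n})\cong V^{\otimes k}\otimes V^{\otimes l}$, one obtains natural isomorphisms
\[
(\calF\otimes\calG)(V)\cong\bigoplus_{k,l\ge 0}\bigl(F(k)\otimes G(l)\bigr)\otimes_{\k(S_k\times S_l)}\bigl(V^{\otimes k}\otimes V^{\otimes l}\bigr)\cong\calF(V)\otimes\calG(V);
\]
a direct check shows these are compatible with the associativity and symmetry constraints, the point being that the Koszul sign rule on the target matches the block transpositions in symmetric groups that define the symmetry of the Cauchy product. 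As a symmetric monoidal functor, $\mathrm{ev}_V$ therefore carries the cofree conilpotent cocommutative coalgebra $S^c(s\mathfrak{g})$ (formed with the Cauchy product) to the cofree conilpotent cocommutative coalgebra $S^c(s\,\mathfrak{g}(V))$ (formed with $\otimes$), and it carries the twisted Lie bracket of $\mathfrak{g}$ to the Lie bracket of $\mathfrak{g}(V)$. Since the Chevalley--Eilenberg differential is the unique coderivation of $S^c(s(-))$ whose corestriction to cogenerators is the quadratic map induced by the bracket, as recalled above, and since a symmetric monoidal functor carries coderivations to coderivations and respects corestriction to cogenerators, $\mathrm{ev}_V$ carries the Chevalley--Eilenberg complex of the twisted Lie algebra $\mathfrak{g}$ to the Chevalley--Eilenberg complex of the ordinary Lie algebra $\mathfrak{g}(V)$.

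Second I would observe that $\mathrm{ev}_V$ is exact. Since $\k$ has characteristic zero, each group algebra $\k S_n$ is semisimple, so $V^{\otimes n}$ is a projective, hence flat, left $\k S_n$-module, and $(-)\otimes_{\k S_n}V^{\otimes n}$ is an exact functor of the right $\k S_n$-module in the first slot; a direct sum of exact functors is exact. An exact functor commutes with the homology of a complex, so
\[
H_\bullet(\mathfrak{g}(V),\k)=H_\bullet\!\bigl(\mathrm{ev}_V(\mathrm{CE}(\mathfrak{g}))\bigr)\cong\mathrm{ev}_V\!\bigl(H_\bullet(\mathrm{CE}(\mathfrak{g}))\bigr)=H_\bullet(\mathfrak{g},\k)(V),
\]
where $\mathrm{CE}(-)$ denotes the Chevalley--Eilenberg complex. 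Specializing to $\mathfrak{g}=\calT\calA\calG$ and combining with $\mathsf{TAG}(\Jord(V))\cong\calT\calA\calG(V)$ yields the proposition. There are no convergence subtleties, since $\calT\calA\calG$ is concentrated in strictly positive arities and hence $\mathrm{CE}(\calT\calA\calG)$ is, in each fixed arity, a finite direct sum of finite-dimensional representations of a symmetric group; moreover all the isomorphisms above are natural in $V$, hence $GL(V)$-equivariant, and one checks they are $\mathfrak{sl}_2$-equivariant as well.

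The genuinely delicate point --- the only place calling for care rather than a formal manipulation of adjunctions --- is the verification, in the second paragraph, that the monoidal comparison isomorphisms respect the symmetry constraint. This is exactly what guarantees that $\mathrm{ev}_V$ transports $S^c$ and the Chevalley--Eilenberg differential, which encode Koszul signs coming from the suspension $s$ and from transpositions of tensor factors, to the correct constructions on $\mathfrak{g}(V)$ rather than to sign-twisted variants of them. Once that compatibility is in place, the characteristic-zero hypothesis does the remaining work twice over: it makes $S^c$ a direct sum of (suspended) symmetric and exterior power functors, visibly preserved by a symmetric monoidal functor, and it makes $\mathrm{ev}_V$ exact, so that homology may be taken before or after evaluation at $V$.
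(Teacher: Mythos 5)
Your proof is correct and takes essentially the same route as the paper: both arguments rest on the observation that the Chevalley--Eilenberg complex is built from constructions (symmetric powers of the suspension, the coderivation corestricting to the bracket) that are performed at the level of analytic functors, so that forming the complex and taking homology commute with evaluation at $V$. The only cosmetic difference is that the paper packages the identification of the complex via the Koszul dual cooperad $\Lie^{\ac}$ and its twisting cochain, whereas you verify directly that $\mathrm{ev}_V$ is symmetric monoidal and exact (the latter by semisimplicity of $\k S_n$ in characteristic zero), which is the same mechanism spelled out in elementary terms.
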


\begin{proof}
The underlying vector space of the Chevalley--Eilenberg complex of the Lie algebra $\mathsf{TAG}(V)$ is, as we already indicated, 
 \[
(S^c(s\mathsf{TAG}(V)),d),   
 \]
and, if we note that $S^c(sV)\cong s\Lie^{\ac}(V)$, where $\Lie^{\ac}$ is the Koszul dual cooperad of the Lie operad \cite[Sec.~7.2]{MR2954392}, the differential $d$ comes from the Koszul twisting cochain $\kappa\colon\Lie^{\ac}\to\Lie$. Thus, computing the differential (and its homology) commutes with the evaluation of analytic functors on $V$.
\end{proof}

\begin{corollary}\label{cor:SchurWeyl}
Conjecture \ref{conj:conj1} holds for all choices of the vector space of generators $V$ if and only if the $\mathfrak{sl}_2$-module 
$H_k(\calT\calA\calG,\k)$ 
has no trivial or adjoint component for $k>1$. 
\end{corollary}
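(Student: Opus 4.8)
The plan is to promote Proposition~\ref{prop:Kunneth} to an $\mathfrak{sl}_2$-equivariant isomorphism of analytic functors and then to deduce the corollary from a standard Schur--Weyl-type dictionary between analytic functors and their values. First I would note that the $\mathfrak{sl}_2$-action on $\mathsf{TAG}(\Jord(V))=\mathfrak{sl}_2\otimes\Jord(V)\oplus\calB(\Jord(V))$, acting on the left tensor factor of $\mathfrak{sl}_2\otimes\Jord(V)$ and trivially on $\calB(\Jord(V))$, is already present on the twisted Lie algebra $\calT\calA\calG=\mathfrak{sl}_2\otimes\Jord\oplus\calB(\Jord)$, where it commutes with the right $\Jord$-module structure and hence with the analytic functor structure; thus $\calT\calA\calG$ is a twisted Lie algebra in the category of $\mathfrak{sl}_2$-modules. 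Since the Chevalley--Eilenberg complex of a twisted Lie algebra is functorial and is assembled from the bracket together with the cofree conilpotent cocommutative coalgebra functor $S^c$, both of which are $\mathfrak{sl}_2$-equivariant in our situation, the homology $H_\bullet(\calT\calA\calG,\k)$ is again an analytic functor valued in $\mathfrak{sl}_2$-modules, and the isomorphism $H_\bullet(\mathsf{TAG}(\Jord(V)),\k)\cong H_\bullet(\calT\calA\calG,\k)(V)$ of Proposition~\ref{prop:Kunneth} is one of $\mathfrak{sl}_2$-modules for every finite-dimensional $V$.

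Granting this, the ``if'' direction is formal: evaluation of an analytic functor on a vector space cannot introduce $\mathfrak{sl}_2$-isotypes absent from the arity components. Indeed, for an analytic functor $\calK=\{K(n)\}$ with each $K(n)$ carrying a commuting $\mathfrak{sl}_2$-action, the summand $K(n)\otimes_{\k S_n}V^{\otimes n}$ of $\calK(V)$ is a subquotient --- in fact, since $\k S_n$ is semisimple, a direct summand --- of $K(n)\otimes_{\k}V^{\otimes n}$, which as an $\mathfrak{sl}_2$-module is merely $K(n)^{\oplus\dim V^{\otimes n}}$ because $V^{\otimes n}$ is $\mathfrak{sl}_2$-trivial. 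Hence if $H_k(\calT\calA\calG,\k)$ contains no trivial or adjoint $\mathfrak{sl}_2$-component for $k>1$, then neither does $H_k(\mathsf{TAG}(\Jord(V)),\k)\cong H_k(\calT\calA\calG,\k)(V)$ for any $V$, which is exactly Conjecture~\ref{conj:conj1}.

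For the ``only if'' direction I would run the comparison the other way, the crucial point being that evaluation on a large enough $V$ \emph{detects} the entire arity component. Decomposing $V^{\otimes n}\cong\bigoplus_{\lambda\vdash n}S^{\lambda}\otimes\mathbb{S}_{\lambda}(V)$ as an $S_n\times GL(V)$-module, with $S^{\lambda}$ the Specht module and $\mathbb{S}_{\lambda}$ the Schur functor, we have $\mathbb{S}_{\lambda}(V)\ne 0$ for all $\lambda\vdash n$ as soon as $\dim V\ge n$; thus, writing $K(n)=\bigoplus_{\lambda\vdash n}S^{\lambda}\otimes N_{\lambda}$ with multiplicity spaces $N_{\lambda}$ that are $\mathfrak{sl}_2$-modules, the value $K(n)\otimes_{\k S_n}V^{\otimes n}\cong\bigoplus_{\lambda\vdash n}N_{\lambda}\otimes\mathbb{S}_{\lambda}(V)$ contains every $\mathfrak{sl}_2$-irreducible occurring in $K(n)$. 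Consequently, if $H_k(\calT\calA\calG,\k)$ had a nonzero trivial or adjoint $\mathfrak{sl}_2$-component in some arity $n$ and some $k>1$, then taking $V$ with $\dim V\ge n$ would force $H_k(\mathsf{TAG}(\Jord(V)),\k)$ to contain a nonzero trivial or adjoint component, contradicting Conjecture~\ref{conj:conj1} for that $V$. Together, the two directions give the equivalence.

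I do not expect a serious obstacle here. The only thing needing care is checking that each step --- the $\mathsf{TAG}$ bracket, the passage to the Chevalley--Eilenberg complex, and the computation of its homology --- is simultaneously compatible with the $\mathfrak{sl}_2$-action and with the evaluation of analytic functors; this is immediate once one observes that $\mathfrak{sl}_2$ acts solely through the first tensor factor of $\mathfrak{sl}_2\otimes\Jord$ and therefore commutes with everything, while the Specht-module bookkeeping in the ``only if'' direction is entirely routine (this is what the word ``Schur--Weyl'' in the statement refers to).
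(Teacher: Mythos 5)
Your proof is correct and follows essentially the same route as the paper: the ``if'' direction is exactly Proposition~\ref{prop:Kunneth} (evaluation of an $\mathfrak{sl}_2$-equivariant analytic functor on an $\mathfrak{sl}_2$-trivial $V$ creates no new isotypes), and the ``only if'' direction is the Schur--Weyl reconstruction of the arity components from the values, which you merely spell out via the decomposition $K(n)\otimes_{\k S_n}V^{\otimes n}\cong\bigoplus_{\lambda}N_{\lambda}\otimes\mathbb{S}_{\lambda}(V)$ with $\dim V\ge n$. The equivariance bookkeeping you include is left implicit in the paper but is the right thing to check.
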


\begin{proof}
Proposition \ref{prop:Kunneth} implies the ``if'' part of the statement. The ``only if'' part, that is the assertion that vanishing of the trivial and the adjoint component of $H_k(\mathsf{TAG}(V),\k)$ for all $V$ implies the same for $H_k(\calT\calA\calG,\k)$, follows from the fact that the ``Taylor coefficients'' of an analytic functor can be uniquely reconstructed from its values using the Schur--Weyl duality \cite{MR1153249}.
\end{proof}

In \cite{MR4853483}, a generalization of the Kashuba--Mathieu conjecture for Jordan superalgebras is proposed. From the operadic point of view, this generalization comes at no cost: over the years, the first author of this note has been advertising the viewpoint that superalgebras over a given operad are merely algebras over the same operad in a larger symmetric monoidal category \cite{DotRev}, and hence various results that can be stated and proved in terms of the corresponding operad (e.g., the Poincaré--Birkhoff--Witt type theorems \cite{MR4300233} or the Nielsen--Schreier property \cite{MR4675074}) are automatically true for superalgebras if already proved for algebras. To give a yet another example of this approach, let us record the following result.

\begin{proposition}\label{prop:super}
Suppose that Conjecture \ref{conj:conj1} is true for all free Jordan algebras. Then it is true for all free Jordan superalgebras. 
\end{proposition}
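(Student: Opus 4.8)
The plan is to reduce the statement to Corollary~\ref{cor:SchurWeyl}, which reformulates Conjecture~\ref{conj:conj1} as a property of the twisted Lie algebra $\calT\calA\calG$ alone, and then to observe that this reformulation does not see the symmetric monoidal category in which analytic functors are evaluated. First I would recall that a Jordan superalgebra is nothing but an algebra over the operad $\Jord$ in the category $\mathsf{sVect}$ of super vector spaces equipped with the Koszul sign rule, and that the free Jordan superalgebra on a finite-dimensional super vector space $W$ is the evaluation $\Jord(W)=\bigoplus_{n\ge 1}\Jord(n)\otimes_{\k S_n}W^{\otimes n}$ of the analytic functor $\Jord$ at $W$. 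The evaluation functor $(-)(W)$ from $S$-modules to $\mathsf{sVect}$ is $\k$-linear, cocontinuous, and strong symmetric monoidal for the Cauchy product, so it carries operads to operads, twisted Lie algebras to Lie algebras, and so on; the case $W\in\mathsf{Vect}$ (purely even) recovers the ordinary free Jordan algebra.

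Next I would check that every construction used in Section~\ref{sec:operads} already lives at the level of $S$-modules, and is therefore transported by $(-)(W)$ exactly as in the case $\mathsf{Vect}$. Three points are needed. (i) The $\mathsf{TAG}$ construction and the formulas \eqref{eq:TAG1}--\eqref{eq:TAG3} make sense in any symmetric monoidal $\k$-linear category, so the reasoning that establishes $\mathsf{TAG}(\Jord(V))\cong\calT\calA\calG(V)$ applies verbatim to $W\in\mathsf{sVect}$, giving an isomorphism of Lie algebras $\mathsf{TAG}(\Jord(W))\cong\calT\calA\calG(W)$ in $\mathsf{sVect}$. (ii) The Chevalley--Eilenberg complex is the bar construction attached to the Koszul twisting cochain $\kappa\colon\Lie^{\ac}\to\Lie$, a morphism of $S$-modules; hence, just as in the proof of Proposition~\ref{prop:Kunneth}, forming this complex and passing to homology commutes with $(-)(W)$, so that $H_\bullet(\mathsf{TAG}(\Jord(W)),\k)\cong H_\bullet(\calT\calA\calG,\k)(W)$ inside $\mathsf{sVect}$. (iii) The $\mathfrak{sl}_2$-action on $\calT\calA\calG$ operates through the fixed even tensor factor in $\mathfrak{sl}_2\otimes\Jord$, so it is an action on the underlying $S$-module; since $\k$ has characteristic zero the $\mathfrak{sl}_2$-isotypic decomposition $H_k(\calT\calA\calG,\k)\cong\bigoplus_{\lambda}M_k^{(\lambda)}\otimes L(\lambda)$, with multiplicity $S$-modules $M_k^{(\lambda)}$, is a decomposition of $S$-modules-with-$\mathfrak{sl}_2$-action, and applying $(-)(W)$ yields the $\mathfrak{sl}_2$-isotypic decomposition of $H_k(\mathsf{TAG}(\Jord(W)),\k)$ with multiplicity space $M_k^{(\lambda)}(W)$.

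The argument then concludes in one line. Suppose Conjecture~\ref{conj:conj1} holds for all free Jordan algebras; by the ``only if'' direction of Corollary~\ref{cor:SchurWeyl}, which reconstructs the $S$-modules $M_k^{(0)}$ and $M_k^{(2)}$ from their values on ordinary vector spaces via Schur--Weyl duality, these $S$-modules vanish for $k>1$. Evaluating at an arbitrary finite-dimensional super vector space $W$ and using (i)--(iii), the trivial and adjoint $\mathfrak{sl}_2$-components $M_k^{(0)}(W)$ and $M_k^{(2)}(W)$ of $H_k(\mathsf{TAG}(\Jord(W)),\k)$ vanish for $k>1$, which is precisely Conjecture~\ref{conj:conj1}, in the form proposed by Shang~\cite{MR4853483}, for the free Jordan superalgebra on $W$.

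The proof is entirely formal, which is exactly the point: once the conjecture of Kashuba and Mathieu is phrased operadically, $\mathsf{Vect}$ and $\mathsf{sVect}$ play interchangeable roles. The one place that demands care --- and hence the main, albeit modest, obstacle --- is the bookkeeping of Koszul signs: one must make sure that no sign subtlety obstructs the identification of the free Jordan superalgebra with $\Jord(W)$ in $\mathsf{sVect}$, nor the passage through $(-)(W)$ of the bar differential of Proposition~\ref{prop:Kunneth} and of the $\mathfrak{sl}_2$-isotypic projectors. These are standard consequences of the Koszul sign rule and of the theory of the Cauchy monoidal structure on $S$-modules, but they deserve to be spelled out rather than left implicit.
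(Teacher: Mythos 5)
Your proposal is correct and follows essentially the same route as the paper: identify the free Jordan superalgebra as the evaluation of the analytic functor $\Jord$ on a super vector space, note that the Chevalley--Eilenberg/bar construction and hence homology commutes with this evaluation (the super analogue of Proposition~\ref{prop:Kunneth}), and conclude via the ``only if'' direction of Corollary~\ref{cor:SchurWeyl}. Your extra care in spelling out the $\mathfrak{sl}_2$-isotypic decomposition at the level of $S$-modules is a slightly more explicit version of what the paper leaves implicit, but it is not a different argument.
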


\begin{proof}
The free Jordan superalgebra $\Jord(V_0\mid V_1)$ generated by the superspace $V_0\mid V_1$ can also be written as a value of the same analytic functor:
 \[
\Jord(V_0\mid V_1)\cong  \Jord(n)\otimes_{\k S_n}(V_0\mid V_1)^{\otimes n},   
 \]
where the latter formula uses the symmetric monoidal structure on the category of $\mathbb{Z}/2\mathbb{Z}$-graded vector spaces given by the Koszul sign rule 
 \[
\sigma(u\otimes v)=(-1)^{|u||v|}v\otimes u.     
 \]
The Lie superalgebra homology also admits a description as the homology of $(S^c(sL),d)$, where $S^c(sL)$ now denotes the cofree conilpotent cocommutative coassociative co-superalgebra generated by $sL$ (which is nothing but the cofree conilpotent cocommutative coassociative \emph{coalgebra} in the symmetric monoidal category of $\mathbb{Z}/2\mathbb{Z}$-graded vector spaces), and $d$ is the unique coderivation of $S^c(sL)$ extending the map 
 \[
S^c(sL)\twoheadrightarrow S^2(sL)\to sL    
 \]
made of the projection onto $S^c(sL)\twoheadrightarrow S^2(sL)$ and the map $S^2(sL)\to sL$ corresponding to the Lie superalgebra structure. 
The obvious analogue of Proposition~\ref{prop:Kunneth} holds, so 
 \[
H_\bullet(\mathsf{TAG}(\Jord(V_0\mid V_1)),\k)\cong H_\bullet(\calT\calA\calG,\k)(V_0\mid V_1),     
 \]
and the claim follows from Corollary \ref{cor:SchurWeyl}. 
\end{proof}

\section{Examples and counterexamples to the conjecture}\label{sec:counterexamples}

\subsection{The \texorpdfstring{$\Jord(\varnothing \mid x)$}{J01} case}

In \cite{MR4853483}, Conjecture \ref{conj:conj2} is checked for the free Jordan superalgebra on one odd generator $x$. Let us establish a stronger result.

\begin{proposition}\label{prop:Jord01}
Conjecture \ref{conj:conj1} holds for the free Jordan superalgebra $\Jord(\varnothing \mid x)$. 
\end{proposition}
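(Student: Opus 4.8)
The plan is to compute the free Jordan superalgebra $\Jord(\varnothing\mid x)$ explicitly, use that explicit description to identify the Lie superalgebra $\mathsf{TAG}(\Jord(\varnothing\mid x))$, and then read off its homology directly. First I would recall that for a single odd generator $x$, the element $x$ is odd and $x^2$ is even; the Jordan identity applied to a single generator is vacuous beyond what commutativity already gives, but in the super setting one must track signs. The key computation is that the free Jordan superalgebra on one odd generator is small: it is spanned by $x$ and $x^2$, with $x^2\cdot x^2$ being a potential degree-four element. I would check, using the multilinear Jordan (super)identity recorded in Section~\ref{sec:operads}, whether $x^2\cdot x^2$ vanishes or not, and whether any higher products survive. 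The expectation (consistent with the fact that \cite{MR4853483} verifies Conjecture~\ref{conj:conj2} here) is that $\Jord(\varnothing\mid x)$ is finite-dimensional, spanned by $x$ (odd) and $x^2$ (even), so that it is in fact the $3$-dimensional Jordan superalgebra with trivial products among even elements beyond what is forced.

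Next I would compute $\calB(\Jord(\varnothing\mid x))=\Lambda^2(\Jord(\varnothing\mid x))/(ab\wedge c+bc\wedge a+ca\wedge b)$, again being careful with the Koszul sign rule since $\Lambda^2$ here means the super-exterior square. With $\Jord(\varnothing\mid x)$ spanned by the odd element $x$ and the even element $x^2$, the super-exterior square $\Lambda^2$ is spanned by $x\wedge x$ (which is nonzero, since $x$ is odd, so it contributes a symmetric square in the ordinary sense) and $x\wedge x^2$; one then quotients by the cyclic relations. I would determine $\calB$ precisely and hence get $\mathsf{TAG}(\Jord(\varnothing\mid x))=\mathfrak{sl}_2\otimes\Jord(\varnothing\mid x)\oplus\calB(\Jord(\varnothing\mid x))$ as an explicit finite-dimensional Lie superalgebra, together with its brackets from \eqref{eq:TAG1}--\eqref{eq:TAG3}. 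Because the whole algebra is finite-dimensional and rather small, I can then compute its Lie superalgebra homology $H_\bullet$ by hand from the Chevalley--Eilenberg complex (equivalently the bar construction $(S^c(sL),d)$ described in the excerpt), decompose each $H_k$ as an $\mathfrak{sl}_2$-module, and verify that for $k>1$ no trivial $L(0)$ or adjoint $L(2)$ component appears.

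The main obstacle I anticipate is not conceptual but bookkeeping: getting all the Koszul signs right in the super-exterior algebra and in the Chevalley--Eilenberg differential, and correctly handling the fact that $\mathfrak{sl}_2$ is concentrated in even degree while $\Jord(\varnothing\mid x)$ is mixed, so that $\mathfrak{sl}_2\otimes x$ is odd and $\mathfrak{sl}_2\otimes x^2$ is even. A secondary subtlety is making sure the $\mathfrak{sl}_2$-decomposition of the homology is computed on the nose rather than just its dimension — since the conjecture is about isotypic components, I need the actual $\mathfrak{sl}_2$-module structure, which I would extract by tracking weights under the Cartan of $\mathfrak{sl}_2$ throughout. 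Once the signs and the grading are pinned down, the verification that the higher homology contains only copies of $L(1)$ (and possibly other components of even highest weight $\ge 3$, but crucially not $L(0)$ or $L(2)$) should be a finite check.
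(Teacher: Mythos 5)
Your overall strategy---describe $\Jord(\varnothing\mid x)$ explicitly, form $\mathsf{TAG}$, and read off the $\mathfrak{sl}_2$-module structure of its homology---is the same as the paper's, but two steps of your plan contain genuine errors. First, the explicit description you expect is wrong: a Jordan superalgebra is supercommutative, so for an odd element $x$ one has $x\cdot x=(-1)^{|x||x|}x\cdot x=-x\cdot x$, hence $x^2=0$ in characteristic zero (this is forced by supercommutativity alone, before any use of the Jordan identity). Thus $\Jord(\varnothing\mid x)=\k x$ is one-dimensional, not ``spanned by $x$ and $x^2$'' and certainly not three-dimensional; consequently $\calB(\Jord(\varnothing\mid x))$ is one-dimensional, spanned by $x\wedge x$, and $\mathsf{TAG}(\Jord(\varnothing\mid x))$ is the four-dimensional Lie superalgebra with odd part $\mathfrak{sl}_2\otimes\k x$, even part $\k\,(x\wedge x)$, and only nonzero brackets $[a\otimes x,b\otimes x]=\tfrac12\mathrm{tr}(ab)\,x\wedge x$, i.e.\ the Heisenberg superalgebra of the odd space $\Pi\mathfrak{sl}_2$ with its Killing form. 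Working instead with your expected two-dimensional $J$ would lead you to compute the homology of the wrong Lie superalgebra.

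Second, your concluding step (``compute the homology by hand\ldots should be a finite check'') cannot be a finite check: since the odd part of this superalgebra is three-dimensional, the Chevalley--Eilenberg complex is $S(\mathfrak{sl}_2^*)\otimes\Lambda(c^*)$, which is infinite-dimensional, and the homology is nonzero in \emph{every} degree, namely $H^p\cong L(2p)$ for all $p\ge 0$ (so in particular your guess that only $L(1)$ and components of ``even highest weight $\ge 3$'' occur is off). Establishing Conjecture \ref{conj:conj1} here therefore requires an argument uniform in the homological degree. The paper obtains it by recognizing the Heisenberg superalgebra and invoking the known computation of its cohomology, $H^\bullet\cong S(\mathfrak{sl}_2^*)/(K)$ with homological degree equal to polynomial degree; equivalently, you could note that the Chevalley--Eilenberg differential is the Koszul differential for the single non-zero-divisor $K\in S^2(\mathfrak{sl}_2^*)$ and conclude the same way. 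With these two corrections your outline becomes the paper's proof; as written, it has a wrong input and a missing uniform argument.
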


\begin{proof}
In this case, the corresponding Lie superalgebra $TAG(\Jord(\varnothing \mid x))$ is a very small finite-dimensional Lie superalgebra
 \[
\mathfrak{sl}_2\otimes \k x\oplus \k x\cdot x,     
 \]
with the only nonzero brackets $[ax,bx]=K(a,b)x\cdot x$ for $a,b\in\mathfrak{sl}_2$; here $K$ is the Killing form of $\mathfrak{sl}_2$. This Lie superalgebra can be viewed as the Heisenberg algebra of the odd vector space $\Pi\mathfrak{sl}_2$, on which the Killing form is a super-skew\-symmetric bilinear form. The cohomology of all Heisenberg superalgebras with trivial coefficients is computed in~\cite{MR3605960}. In the form suitable for our purposes, case $m=0$ of \cite[Th.~4.1]{MR3605960} implies that we have an algebra isomorphism
 \[
H^\bullet(TAG(\Jord(\varnothing \mid x)),\k)\cong S(\mathfrak{sl}_2^*)/(K),
 \]
where the Killing form $K$ is naturally identified with an element of $S^2(\mathfrak{sl}_2^*)$; moreover, under this isomorphism, the homological degree corresponds to the degree of polynomials. Note that as an $\mathfrak{sl}_2$-module, we have 
 \[
S(\mathfrak{sl}_2^*)/(K)\cong \bigoplus_{p\ge 0}L(2p),     
 \]
and we have
 \[
H^p(TAG(\Jord(\varnothing \mid x)),\k) \cong L(2p),  
 \]
meaning that we have an analogue of the Garland--Lepowsky theorem \cite{MR414645} in this case. In particular, $H^p$ does not contain trivial or adjoint components for $p>1$, proving the statement.
\end{proof}

\subsection{The case of degrees not exceeding \texorpdfstring{$10$}{10}}

The main result of this section is the following result, which we regarded as very strong evidence in favor of Conjecture~\ref{conj:conj1}. 

\begin{proposition}
Conjecture \ref{conj:conj2} is true for all Jordan algebras and superalgebras in degrees not exceeding $10$.
\end{proposition}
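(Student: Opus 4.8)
The plan is to reduce Conjecture~\ref{conj:conj2} in each fixed degree to a finite linear-algebra problem over $S_n$-modules and then to solve that problem by machine. By the analytic functor formalism of Section~\ref{sec:operads} together with the Schur--Weyl duality invoked in the proof of Corollary~\ref{cor:SchurWeyl}, the class $[\Jord(V)]$ in degree $n$ in the Grothendieck ring of $GL(V)$ is equivalent data to the multilinear component $\Jord(n)$ viewed as a right $\k S_n$-module, and likewise $[\calB(\Jord(V))]$ in degree $n$ is equivalent data to $\calB(\Jord)(n)$. Moreover, since the free Jordan superalgebra is the value of the very same analytic functor on a superspace (as in the proof of Proposition~\ref{prop:super}), the character of its degree-$n$ component is obtained from the symmetric function attached to $\Jord(n)$ by the substitution dictated by the Koszul sign rule, and similarly for $\calB$. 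Hence it suffices to check, for every $n\le 10$, that $\Jord(n)$ and $\calB(\Jord)(n)$ have the $S_n$-characters predicted respectively by $a(V)$ and $b(V)$; both the algebra and the superalgebra forms of the conjecture in degrees $\le 10$ then follow simultaneously.

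To compute $\Jord(n)$ I would use the algorithms of the second author~\cite{MR435159,MR463251}. One starts from the multilinear degree-$n$ component of the free commutative nonassociative algebra on $x_1,\dots,x_n$, which has a basis indexed by commutative binary trees with $n$ labelled leaves, and forms the quotient by the $S_n$-submodule spanned by all substitution instances of the multilinear Jordan identity $((ab)c)d+((bd)c)a+((ad)c)b=(ab)(cd)+(ac)(bd)+(ad)(bc)$, that is, by all expressions obtained by replacing its four variables with multilinear monomials of complementary total degree and grafting the result into an arbitrary commutative monomial context. The dimension of the quotient together with the residual $S_n$-action is exactly $\Jord(n)$. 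The ambient space here has size $(2n-3)!!$, already of the order of tens of millions for $n=10$, so the computation must be carried out representation-theoretically: one fixes a partition $\lambda\vdash n$, passes to the corresponding multiplicity space by applying Young symmetrizers, and computes the rank of a comparatively small matrix there, over $\mathbb{Q}$ or over a large prime field. Running this for each $\lambda\vdash n$ and each $n\le 10$ produces $\Jord(n)$ as an $S_n$-module.

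The module $\calB(\Jord)(n)$ is treated in the same spirit, with ambient space the degree-$n$ multilinear part of the exterior square $\Lambda^2(\Jord)$ formed with respect to the Cauchy product --- a direct sum of induced modules $\mathrm{Ind}_{S_k\times S_{n-k}}^{S_n}(\Jord(k)\otimes\Jord(n-k))$ with the appropriate $(anti)$symmetrisation --- quotiented by the span of the relators $ab\wedge c+bc\wedge a+ca\wedge b$ with $a,b,c$ ranging over multilinear Jordan monomials of complementary degrees. This needs the modules $\Jord(k)$ for $k<n$ as input, so the whole computation proceeds by increasing $n$. In parallel, the predicted classes $a(V)$ and $b(V)$, regarded degree by degree as virtual $S_n$-characters, are generated up to degree $10$ by the explicit recursion extracted from the proof of \cite[Lemma~1]{MR4235202} and implemented in \texttt{SageMath}; the last step is to compare the two sides partition by partition.

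The main obstacle is not conceptual but computational: the combinatorial explosion of the multilinear spaces and the resulting large, dense rank computations. Keeping this feasible requires the representation-theoretic reduction above --- never materialising the full multilinear space, only its isotypic pieces --- and some care with the linear algebra itself, for instance performing the rank computations modulo several distinct primes to rule out accidental rank drops, or certifying the rational rank directly. Once the matrices are within reach, the comparison with the \texttt{SageMath} predictions is mechanical, and the observed agreement in every degree $n\le 10$ establishes the proposition.
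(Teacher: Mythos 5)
Your plan is essentially the paper's proof: both reduce the degree-$n$ claim, for algebras and superalgebras simultaneously, to the $S_n$-module structure of the operadic components via Schur--Weyl duality and the analytic-functor formalism, compute the predicted classes from the recursion in \cite[Lemma 1]{MR4235202} in \texttt{SageMath}, and verify the actual modules isotypic component by isotypic component using the representation-theoretic rank computations of \cite{MR435159,MR463251}. Two differences are worth recording. First, the paper's treatment of $n=9,10$ hinges on a reduction you do not use: besides commutativity (Wedderburn--Etherington rather than Catalan counts), the identity $L_{(ac)b}=-L_aL_bL_c-L_cL_bL_a+L_{ac}L_b+L_{ab}L_c+L_{bc}L_a$ allows one to keep only left-normed association types $(\ldots((x_1x_2)x_3)\ldots)x_n$ with factors of degree $1$ or $2$, so the number of types in degree $10$ drops from $98$ to the Fibonacci number $34$; your ambient blocks of size $98\,d_\lambda$ per partition (from the $(2n-3)!!$-dimensional space of labelled commutative trees) are roughly three times larger in each matrix dimension, which is the difference between a barely feasible and a comfortable computation (the paper also cross-checks $n\le 8$ independently via operadic Gr\"obner bases and the \texttt{albert} program, a useful guard against computational error). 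Second, you additionally propose a direct verification of the equality $[\calB(\Jord(V))]=b(V)$; the paper's written argument only records the comparison of $\Jord(n)$ with the prediction $a(V)$, so on this point your plan is the more literal reading of Conjecture \ref{conj:conj2} --- but note that it is substantially heavier than you suggest, since spanning the multilinear relators $ab\wedge c+bc\wedge a+ca\wedge b$ requires explicit bases and multiplication (operadic composition) data for $\Jord(k)$ up to $k=n-1$ (already $\dim\Jord(9)=175203$), not merely the ranks that suffice for the $\Jord$-half.
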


\begin{proof}
Using \texttt{SageMath} \cite{sagemath}, we computed the prediction of Conjecture \ref{conj:conj2} about the $S_n$-module structure of $\Jord(n)$ for $n\le 10$, which is as follows (if one denotes by $V_\lambda$, for $\lambda$ a partition of $n$, the irreducible $S_n$-module corresponding to $\lambda$).
\begin{gather*}
\Jord(1)\cong V_1,\\
\Jord(2)\cong V_2\\
\Jord(3)\cong V_{2,1}\oplus V_{3}\\
\Jord(4)\cong V_{2,1^2}\oplus V_{2^2}^2\oplus V_{3,1}\oplus V_{4}\\
\Jord(5)\cong V_{2,1^3}\oplus V_{2^2,1}^3\oplus V_{3,1^2}^2\oplus V_{3,2}^3\oplus V_{4,1}^2\oplus V_{5}\\
\Jord(6)\cong V_{2,1^4}\oplus V_{2^2,1^2}^3\oplus V_{2^3}^4\oplus V_{3,1^3}^4\oplus V_{3,2,1}^8\oplus V_{3^2}\oplus V_{4,1^2}^4\oplus V_{4,2}^6\oplus V_{5,1}^2\oplus V_6
\end{gather*}
\begin{multline*}
\Jord(7)\cong V_{2,1^5}\oplus V_{2^2,1^3}^4\oplus V_{2^3,1}^7\oplus V_{3,1^4}^5\oplus V_{3,2,1^2}^{16}\oplus V_{3,2^2}^{12}\\ 
\oplus V_{3^2,1}^9\oplus V_{4,1^3}^8\oplus V_{4,2,1}^{18}\oplus V_{4,3}^{7}\oplus V_{5,1^2}^6\oplus V_{5,2}^8\oplus V_{6,1}^3\oplus V_7
\end{multline*}
\begin{multline*}
\Jord(8)\cong V_{2,1^6}\oplus V_{2^2,1^4}^6\oplus V_{2^3,1^2}^{11}\oplus V_{2^4}^{10}\oplus V_{3,1^5}^5\oplus V_{3,2,1^3}^{26} 
\oplus V_{3,2^2,1}^{34}\\ \oplus V_{3^2,1^2}^{30}\oplus V_{3^2,2}^{19} \oplus V_{4,1^4}^{14}\oplus V_{4,2,1^2}^{41}\oplus V_{4,2^2}^{32}
\oplus V_{4,3,1}^{34}\oplus V_{4^2}^{10}\oplus V_{5,1^3}^{16}\\ \oplus V_{5,2,1}^{32}\oplus V_{5,3}^{12}\oplus V_{6,1^2}^9\oplus V_{6,2}^{12}\oplus V_{7,1}^3\oplus V_8
\end{multline*}

\begin{multline*}
\Jord(9)\cong V_{2,1^7}\oplus V_{2^2,1^5}^7\oplus V_{2^3,1^3}^{18}\oplus V_{2^4,1}^{22}\oplus V_{3,1^6}^6\oplus V_{3,2,1^4}^{38}\oplus V_{3,2^2,1^2}^{74} \oplus V_{3,2^3}^{44}\\ \oplus V_{3^2,1^3}^{58}\oplus V_{3^2,2,1}^{85}\oplus V_{3^3}^{20}\oplus V_{4,1^5}^{20}\oplus V_{4,2,1^3}^{84}\oplus V_{4,2^2,1}^{109}\oplus V_{4,3,1^2}^{107}\oplus V_{4,3,2}^{86}\oplus V_{4^2,1}^{44} \oplus V_{5,1^4}^{31}\\ \oplus V_{5,2,1^2}^{91}\oplus V_{5,2^2}^{64}\oplus V_{5,3,1}^{78}\oplus V_{5,4}^{22}\oplus V_{6,1^3}^{25}\oplus V_{6,2,1}^{53}\oplus V_{6,3}^{24}\oplus V_{7,1^2}^{12}\oplus V_{7,2}^{15}\oplus V_{8,1}^{4}\oplus V_9
\end{multline*}

\begin{multline*}
\Jord(10)\cong V_{2,1^8}\oplus V_{2^2,1^6}^7\oplus V_{2^3,1^4}^{26}\oplus V_{2^4,1^2}^{38}\oplus V_{2^5}^{26}\oplus V_{3,1^7}^8\oplus V_{3,2,1^5}^{53}\\ \oplus V_{3,2^2,1^3}^{139} \oplus V_{3,2^3,1}^{144} \oplus V_{3^2,1^4}^{93}\oplus V_{3^2,2,1^2}^{226}\oplus V_{3^2,2^2}^{122}\oplus V_{3^3,1}^{114}\oplus V_{4,1^{6}}^{26}\oplus V_{4,2,1^4}^{151} \oplus V_{4,2^2,1^2}^{272}\\ \oplus V_{4,2^3}^{162} \oplus V_{4,3,1^3}^{257}\oplus V_{4,3,2,1}^{394} \oplus V_{4,3^2}^{105}\oplus V_{4^2,1^2}^{143}\oplus V_{4^2,2}^{138}\oplus V_{5,1^5}^{50}\oplus V_{5,2,1^3}^{212}\oplus V_{5,2^2,1}^{263}\\ \oplus V_{5,3,1^2}^{289} \oplus V_{5,3,2}^{224} \oplus V_{5,4,1}^{144}\oplus V_{5,5}^{16}\oplus V_{6,1^4}^{58}\oplus V_{6,2,1^2}^{168}\oplus V_{6,2^2}^{120}\oplus V_{6,3,1}^{155}\oplus V_{6,4}^{50}\\ \oplus V_{7,1^3}^{40}\oplus V_{7,2,1}^{80}\oplus V_{7,3}^{35}\oplus V_{8,1^2}^{16}\oplus V_{8,2}^{20}\oplus V_{9,1}^{4}\oplus V_{10}
\end{multline*}
       
Let us outline the computation we performed to verify that these are indeed the decomposition of irreducibles of the corresponding components of the Jordan operad. Using Gröbner bases for operads \cite{OpGb}, we checked that the dimension of $\Jord(n)$ for $n\le 8$ coincides with the predicted values
 \[
 1, 1, 3, 11, 55, 330, 2345, 19089      
 \]
that one obtains by adding dimensions of irreducible $S_n$-modules with the multiplicities displayed above. Furthermore, we confirmed that answer by a computation using the \texttt{albert} program \cite{10.1145/190347.190358}; using that program, it is also possible to determine dimensions of multihomogeneous components of low degrees for Jordan algebras with at most $8$ generators, which gives the characters of the $GL(V)$-module $\Jord(V)_n$ for $n\le 8$, and hence, via the Schur--Weyl duality \cite{MR1153249}, the $S_n$-module structure of $\Jord(n)$ for such $n$, which coincides with the predictions above. However, going beyond degree $8$ in either of these programs does not seem feasible, since the above formulas suggest, in particular, that $\dim\Jord(9)=175203$ and $\dim\Jord(10)=1785840$, so the corresponding computations do not appear practical; additionally, converting a Gröbner basis calculation into the $S_n$-module decomposition would require another heavy computation that we did not attempt even for $n=8$. Thus, we needed a different strategy for $\Jord(9)$ and $\Jord(10)$, which we shall now describe.

The key idea is to directly determine the $S_n$-module $\Jord(n)$ by breaking everything into isotypic components at a very early stage of the computation.  This leads to a very drastic simplification, since the dimensions of vector spaces that we would want to compute will be noticeably lower. 
The idea of using representation theory for studying multilinear identities goes back to Specht \cite{MR35274} in the case of associative algebras and to Malcev \cite{MR33280} in case of possibly nonassociative algebras. Concrete algorithmic methods applying representation theory of symmetric group to study multilinear identities were first developed by the second author \cite{MR435159,MR463251}, and the fast method of computation of matrices of irreducible representations of $S_n$ is due to Clifton \cite{MR624907,MR2630923}. The reader is also  invited to consult the recent survey \cite{MR3583300} for a detailed description of the method which we now briefly outline.
Basically, if one has a multilinear identity $f$ of degree $n$, a naive method of determining its consequences of degree $n+1$ would mean forming $(n+2)!$ identities, and then determining the dimension of vector space they generate inside the vector space spanned by  all nonassociative monomials. Here the number $(n+2)!=(n+2)(n+1)!$ corresponds to the fact there are $n$ identities of degree $n+1$ where one of the arguments $x_i$ is replaced by $x_ix_{n+1}$,\footnote{From the operad theory point of view, it would be a bit more natural to consider the $n$ ``completely nonsymmetric consequences'' $f(x_1,\ldots,x_ix_{i+1},x_{i+2},\ldots,x_{n+1})$.} and two identities $x_{n+1}f$ and $fx_{n+1}$, and then each of these $n+2$ identities can be acted upon by $(n+1)!$ elements of $S_{n+1}$). The dimension of the vector space spanned by all nonassociative monomials is $c_{n+1}(n+1)!$, where $c_n=\frac1n\binom{2n-2}{n-1}$ is the number of valid bracketings for a nonassociative monomial of degree $n$, known as the Catalan number. Needless to say, this naive approach becomes vastly unpractical very quickly, as factorials grow too fast. Breaking everything into isotypic components means that one can work with vector spaces of multiplicities of irreducible $S_{n+1}$-modules, so the above strategy would be replaced by looking, for every irreducible representation $V_\lambda$ of $S_{n+1}$ with $\dim V_\lambda=d_\lambda$, at $(n+2)d_\lambda$ elements inside a vector space of dimension $c_{n+1}d_\lambda$, the multiplicity space of $V_\lambda$ in the vector space spanned by  all nonassociative monomials. This allows one to advance much further.

In our particular case of Jordan algebras, there are several other simplifications to consider. First of all, Jordan algebras are commutative, and therefore the Catalan numbers will be replaced by the so called Wedderburn--Etherington numbers \cite{oeis-eth} that enumerate the commutative association types; up to degree $10$ these are
 \[
1, 1, 1, 2, 3, 6, 11, 23, 46, 98.     
 \]
Moreover, for a Jordan product, one can use the well known identity (see, e. g., \cite[Lemma B.2.1]{MR2014924})
 \[
L_{(ac)b}=-L_aL_bL_c-L_cL_bL_a+L_{ac}L_b+L_{ab}L_c+L_{bc}L_a
 \]
that allows one to reduce the number of associations even further; this formula means that we may focus only on association types
of the form
 \[
(\ldots((x_1 x_2)x_3)\ldots)x_n ,     
 \]
where each $x_i$ is of degree $1$ or $2$ (a generator or a product of generators). This means that the number of association
types is the appropriate Fibonacci number $f_n$, and up to degree $10$ these are
 \[
1, 1, 1, 2, 3, 5, 8, 13, 21, 34.     
 \]
Thus, in our case, to determine all Jordan identities in degree $n$, we determine the degree $n$ nonsymmetric consequences of the Jordan identity (that is, consequences that only use substitutions, but not the symmetric group action), whose number we denote by $j_{n}$, and then work, for each irreducible representation $V_\lambda$ of $S_{n}$ with a matrix with $f_{n}d_\lambda$ rows (corresponding to the vector space where our identities are expanded, which is the tensor product of the vector space of different association types and the multiplicity space of $V_\lambda$ in $\k S_n$) and $j_{n}d_\lambda$ columns (corresponding to the tensor product of the vector space of degree $n$ nonsymmetric consequences and the multiplicity space of $V_\lambda$ in $\k S_n$). If $r_\lambda$ is the rank of that matrix, then 
 \[
f_nd_\lambda-r_\lambda
 \] 
is the multiplicity of $V_\lambda$ in $\Jord(n)$. For $n=9$ and $n=10$, this allowed us to confirm that the $S_n$-module structure of $\Jord(n)$ coincides with the prediction of Conjecture \ref{conj:conj2}.  

To conclude the proof, it remains to note that for any vector space $V$, we have 
 \[
\Jord(V)_n=\Jord(n)\otimes_{\k S_n} V^{\otimes n},     
 \]
proving the claim.
\end{proof}

\begin{remark}
The computation of the actual module structure for $n=11$, even if optimized using the representation theory of symmetric groups, is a bit too heavy, so we did not attempt it at the moment. However, using \texttt{SageMath}, we computed the prediction of Conjecture \ref{conj:conj2} about the $S_n$-module structure of $\Jord(n)$ for $n\le 32$, and the corresponding module is effective (all multiplicities of irreducibles are non-negative), which we regarded as further very strong evidence in favor of Conjecture \ref{conj:conj1}. 
\end{remark}

\subsection{Computations for three even generators}
Using the \texttt{albert} program, we computed some multigraded dimensions of the free Jordan algebra on three generators beyond degree $10$.

Namely, for degrees $11$ and $12$ and the multidegrees with all components strictly positive (that is, not contained in two-generated subalgebras), we have the following dimensions:
 \[
\begin{tabular}{|c|c|}
\hline
multidegree & dimension \\
\hline
(9,1,1)& 55\\
(8,2,1)& 250\\
(7,3,1)& 660\\
(6,4,1)& 1160\\
(5,5,1)& 1386\\
(7,2,2)& 1000\\
(6,3,2)& 2326\\
(5,4,2)& 3493\\
(5,3,3)& 4651\\
(4,4,3)& 5835\\
\hline
\end{tabular}\qquad 
\begin{tabular}{|c|c|}
\hline
multidegree & dimension \\
\hline
(10,1,1) &66\\
(9,2,1)& 330\\
(8,3,1)& 990\\
(7,4,1)& 1980\\
(6,5,1)& 2772\\
(8,2,2)& 1500\\
(7,3,2)& 3969\\
(6,4,2)& 6982\\
(5,5,2)& 8347\\
(6,3,3)& 9291\\
(5,4,3)& 13961\\
(4,4,4)& 17520\\
\hline
\end{tabular}
 \]
All of them agree with Conjecture \ref{conj:conj2}.

\subsection{The case of two even generators}\label{sec:2gen}

To test the conjecture for two even generators, one may use the results of Cohn \cite{MR60496} and Shirshov \cite{MR75936} that together ensure that the free Jordan algebra $\Jord(x_1,x_2)$ is the free \emph{special} Jordan algebra $\Jord(x_1,x_2)$: it embeds into the Jordan algebra associated to the free associative algebra on two generators; moreover, the image of that embedding can be identified as the space of ``reversible'' elements, that is, the space of invariants the only antiautomorphism of the free algebra that is identical on the space of generators. This dimension is given by the OEIS sequence \cite{oeis-sym} with the first $20$ terms
\begin{multline*}
2, 3, 6, 10, 20, 36, 72, 136, 272, 528, 1056, 2080, 4160, 8256, 16512,\\
 32896, 65792, 131328, 262656, 524800\ldots     
\end{multline*} 
Moreover, it is explained in \cite[Sec.~2.7]{MR4235202} that 
 \[
\dim\calB(\Jord(x_1,x_2))_n=2^n-\dim\Jord(x_1,x_2)_n-N_{\mathrm{necklace}}(n)+N_{\mathrm{bracelet}}(n),     
 \]
where $N_{\mathrm{necklace}}(n)=A000031(n)$ and $N_{\mathrm{bracelet}}(n)=A000029(n)$ are the numbers of necklaces and bracelets made of $n$ beads of two colours, given by the OEIS sequences \cite{oeis-nec} and \cite{oeis-bra} respectively, giving the sequence of dimensions with the first $20$ terms
\begin{multline*}
0, 1, 2, 6, 12, 27, 54, 114, 226, 466, 930, 1888, 3780, 7633, 15288,\\ 30774, 61680, 123899, 248346, 498300,\ldots     
\end{multline*} 

It is checked in \cite[Sec.~2.7]{MR4235202} that Conjecture \ref{conj:conj2} predicts the correct values of $\dim\Jord(x_1,x_2)_n$ for $n\le 15$. Because of that, the following result came as a complete surprise. 

\begin{proposition}\label{prop:Jord20}
Conjecture \ref{conj:conj1} gives correct predictions for $\dim\Jord(x_1,x_2)_n$ for $n\le 18$, but an incorrect prediction for $\dim\Jord(x_1,x_2)_{19}$. Thus, it fails for the free Jordan algebra $\Jord(x_1,x_2)$.
\end{proposition}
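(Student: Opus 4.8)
The plan is to use the implication, recorded in \cite[Cor.~1]{MR4235202}, that Conjecture~\ref{conj:conj1} implies Conjecture~\ref{conj:conj2}: the latter asserts in particular that $[\Jord(V)]=a(V)$ in the Grothendieck ring of $GL(V)$, so specializing $V$ to a two-dimensional space and taking the degree-$n$ component turns Conjecture~\ref{conj:conj1} into a completely explicit numerical prediction for $\dim\Jord(x_1,x_2)_n$. To disprove Conjecture~\ref{conj:conj1} it then suffices to exhibit a single $n$ at which this prediction is wrong. First I would record the \emph{true} values: by the theorems of Cohn \cite{MR60496} and Shirshov \cite{MR75936} recalled in Section~\ref{sec:2gen}, $\Jord(x_1,x_2)$ is the special Jordan algebra of reversible elements inside the free associative algebra on two generators, so $\dim\Jord(x_1,x_2)_n$ is known exactly --- in particular $\dim\Jord(x_1,x_2)_{18}=131328$ and $\dim\Jord(x_1,x_2)_{19}=262656$ --- and, as a bonus invariant that the conjecture must also match, $\dim\calB(\Jord(x_1,x_2))_n$ is given by the necklace/bracelet formula quoted in Section~\ref{sec:2gen}.

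Next I would run the recursive procedure extracted from the proof of \cite[Lemma~1]{MR4235202}, implemented in \texttt{SageMath}, to compute the classes $a(V)$ and $b(V)$ in the Grothendieck ring of $GL(V)$ up to homogeneous degree~$19$: one solves the two Kashuba--Mathieu identities degree by degree, using that the $\lambda$-operation is multiplicative and is given on an effective class $[U]$ by $\lambda([U])=\sum_{k\ge 0}(-1)^k[\Lambda^k(U)]$, which makes each degree a finite linear-algebra step in the representation ring of $GL(V)\times PSL_2$. Specializing to two-dimensional $V$ collapses this to a computation with $GL_2$-characters (symmetric functions in two variables), after which one reads off the dimension of the degree-$n$ component. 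The content of the proposition is the observation that the resulting prediction agrees with the true value for every $n\le 18$ but differs from $262656$ at $n=19$; since this contradicts the equality $[\Jord(V)]=a(V)$ of Conjecture~\ref{conj:conj2} for two-dimensional $V$, it contradicts Conjecture~\ref{conj:conj1}, which in particular therefore fails for $\Jord(x_1,x_2)$.

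The proof is conceptually easy --- a finite computation checked against an independently known answer --- and the real work is making every step trustworthy rather than overcoming a genuine obstacle. The delicate points are implementing the recursive solution of the Kashuba--Mathieu identities and the $\lambda$-operation correctly in the Grothendieck ring before specialization, carrying out the specialization to $\dim V=2$ and the extraction of homogeneous components without indexing errors, and being sure of the ``ground truth'' dimensions; the last is secured by the Cohn--Shirshov description and the OEIS entries \cite{oeis-sym,oeis-nec,oeis-bra}, while the first two are strongly corroborated by the exact agreement through degree~$18$ (and, for $b(V)$, by the matching dimensions of $\calB(\Jord(x_1,x_2))_n$), so the degree-$19$ discrepancy cannot plausibly be a computational artefact. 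I would finish by underlining the logical direction: it is the weakest consequence of the conjecture --- one integer --- that fails, so the strongest statement, Conjecture~\ref{conj:conj1}, falls with it.
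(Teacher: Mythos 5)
Your proposal is logically sound and reaches the proposition by the same overall strategy as the paper: reduce Conjecture~\ref{conj:conj1} to the numerical prediction it makes (via Conjecture~\ref{conj:conj2}) for $\dim\Jord(x_1,x_2)_n$, and compare against the ground truth supplied by Cohn--Shirshov and the OEIS sequence, locating the first discrepancy at $n=19$. Where you differ is in how the comparison is organized. You propose to compute the prediction itself, running the recursive procedure of \cite[Lemma~1]{MR4235202} in the Grothendieck ring of $GL(V)\times PSL_2$ up to degree $19$ and then specializing to $\dim V=2$; this is exactly the computation whose output the paper only records in the remark following the proposition (predicted value $262658$ versus the true $262656$, and similarly $498303$ versus $498300$ for $\calB$), and it has the merit of exhibiting the erroneous predicted numbers explicitly. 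The paper's proof instead uses the ``weakest conjecture'' formulation: the predicted dimensions $a_n(2)$ are the unique sequence making the residue
\[
\mathrm{Res}_{t=0}(1-2z-t+2zt^{-1})\prod_{n\ge 1}(1-z^n(t+t^{-1})+z^{2n})^{a_n(2)}\,dt
\]
vanish in every degree, so one simply plugs in the \emph{known} dimensions $A005418(n)$ as exponents, works in $\mathbb{Q}[t,t^{-1}][z]/(z^{20})$, and observes that the residue vanishes through degree $18$ but equals $2$ in degree $19$; by uniqueness this simultaneously certifies agreement for $n\le 18$ and failure at $n=19$, without ever computing the prediction, and the computation is light enough to be checked essentially by hand. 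Your route is heavier and leans more on the correctness of the Grothendieck-ring implementation (your own ``delicate points''), but it is a valid proof of the same statement; if you adopt it, you might still want to add the paper's residue check as an independent, more easily auditable confirmation that the degree-$19$ discrepancy is not an implementation artefact.
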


\begin{proof}
Recall that the weakest conjecture of \cite{MR4235202} states that for a given $p$ the dimensions $a_n(p):=\dim\Jord(x_1,\ldots,x_p)_n$ form the unique sequence satisfying
 \[
\mathrm{Res}_{t=0}(1-pz-t+pzt^{-1})\prod_{n\ge 1}(1-z^n(t+t^{-1})+z^{2n})^{a_n(p)}\,dt=0.     
 \]
(Here, as usual, the residue at $t=0$ means the coefficient of $t^{-1}$.)  
This means that verifying that for all $n\le N$ the number $a_n(2)$ defined by this property coincides with $A005418(n)$ 
can be done very directly.
We computed in \texttt{SageMath} \cite{sagemath} and independently in \texttt{PARI/GP} \cite{PARI2} the coefficients of $z^k$ in 
\begin{equation}\label{eq:residue}
(1-2z-t+2zt^{-1})\prod_{n=1}^{19}(1-z^n(t+t^{-1})+z^{2n})^{A005418(n)}    
\end{equation}
for $k\le 19$. To organize this computation, it is useful to note that, since we only care about coefficients in low degrees, we may work in the quotient ring $\mathbb{Q}[t,t^{-1}][z]/(z^{20})$, where the huge powers in \eqref{eq:residue} do not have any drastic impact on the complexity of the computation. For instance, one may write
 \[
\prod_{n=10}^{19}(1-z^n(t+t^{-1})+z^{2n})^{A005418(n)}=1-(t+t^{-1})\sum_{n=10}^{19}A005418(n)z^n.   
 \]
This and similar simplifications make it clear that the relevant computation can be done very efficiently, and is even susceptible to a step-by-step human verification.

It turns out that the coefficients of $z^k$ in \eqref{eq:residue} have the residue $0$ for $k\le 18$, but the coefficient for $k=19$ is 
\begin{multline*}
-1218t^{9} + 45184t^{8} - 472252t^{7} + 2389852t^{6} - 7383950t^{5} + 15783200t^{4} - 24906124t^{3} \\
+ 29605472t^{2} - 25748624t^{1} + 13996812 + 2t^{-1} - 9637460t^{-2} + 11988306t^{-3}  \\
- 8946852t^{-4} + 4523548t^{-5} - 1498732t^{-6} + 286354t^{-7} - 23908^t{-9} + 390t^{-9},     
\end{multline*}
and in particular its residue is equal to $2$. This implies that the prediction for $\dim\Jord(x_1,x_2)_k$ is correct for $k\le 18$, but fails for $k=19$. Thus, Conjecture \ref{conj:conj2} does not hold, and hence Conjecture \ref{conj:conj1} does not hold either.   
\end{proof}

\begin{remark}
In fact, one can show that the prediction of Conjecture \ref{conj:conj2} for the dimension of $\Jord(x,y)_{19}$ is $262658$, and not the correct value $262656$ listed above. One can also compute the prediction of Conjecture \ref{conj:conj2} for dimensions of $\calB(\Jord(x,y))$, and find that in degree $20$ it gives $498303$, and not the correct value $498300$ listed above.
\end{remark}

The result we proved immediately implies the following.

\begin{theorem}
Conjecture \ref{conj:conj1} fails for the free Jordan superalgebra 
 \[
\Jord(x_1,\ldots,x_m\mid y_1,\ldots,y_n)   
 \]
whenever $m>1$.
\end{theorem}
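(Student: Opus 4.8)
The plan is to reduce the whole statement to Proposition~\ref{prop:Jord20} by a functoriality argument, so that the odd generators $y_1,\ldots,y_n$, and the even generators beyond $x_1,x_2$, come along for free. The first step is to observe that, for every $m\ge 2$ and every $n\ge 0$, the free Jordan algebra $\Jord(x_1,x_2)$ is a retract of the free Jordan superalgebra $\Jord(x_1,\ldots,x_m\mid y_1,\ldots,y_n)$ in the category of Jordan superalgebras: the universal property gives a morphism $\Jord(x_1,x_2)\to\Jord(x_1,\ldots,x_m\mid y_1,\ldots,y_n)$ fixing $x_1,x_2$, and its composite with the morphism $\Jord(x_1,\ldots,x_m\mid y_1,\ldots,y_n)\to\Jord(x_1,x_2)$ that fixes $x_1,x_2$ and kills every other generator is the identity on generators, hence the identity. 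As in the proof of Proposition~\ref{prop:super}, we regard Jordan superalgebras as algebras over the Jordan operad in $\mathbb{Z}/2\mathbb{Z}$-graded spaces, so $\Jord(x_1,x_2)$, purely even, sits here as an honest sub-superalgebra.

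The second step is to push this retract through the two functors involved. As recalled before Conjecture~\ref{conj:conj1}, and used in the super setting in Proposition~\ref{prop:super}, $\mathsf{TAG}$ is a functor from Jordan (super)algebras to Lie (super)algebras in the category of $\mathfrak{sl}_2$-modules, the $\mathfrak{sl}_2$-action being carried entirely by the tensor factor $\mathfrak{sl}_2$; hence $\mathsf{TAG}(\Jord(x_1,x_2))$ is a retract of $\mathsf{TAG}(\Jord(x_1,\ldots,x_m\mid y_1,\ldots,y_n))$, in particular an $\mathfrak{sl}_2$-module direct summand of it. Applying the homology functor $H_k(-,\k)$, which likewise sends retracts to retracts, I conclude that, for every $k$, the $\mathfrak{sl}_2$-module $H_k(\mathsf{TAG}(\Jord(x_1,x_2)),\k)$ is a direct summand of $H_k(\mathsf{TAG}(\Jord(x_1,\ldots,x_m\mid y_1,\ldots,y_n)),\k)$.

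The third step is to invoke Proposition~\ref{prop:Jord20}, which asserts that Conjecture~\ref{conj:conj2}, and therefore Conjecture~\ref{conj:conj1}, fails for $\Jord(x_1,x_2)$; by the statement of Conjecture~\ref{conj:conj1} this means precisely that there is some $k>1$ for which $H_k(\mathsf{TAG}(\Jord(x_1,x_2)),\k)$ contains a trivial or an adjoint $\mathfrak{sl}_2$-component. By the second step, that component also occurs in $H_k(\mathsf{TAG}(\Jord(x_1,\ldots,x_m\mid y_1,\ldots,y_n)),\k)$, so Conjecture~\ref{conj:conj1} fails for $\Jord(x_1,\ldots,x_m\mid y_1,\ldots,y_n)$ as soon as $m>1$.

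There is no real obstacle and no new computation to produce here: everything rests on Proposition~\ref{prop:Jord20}. The only point that needs care is the bookkeeping of the $\mathfrak{sl}_2$-action, so that ``direct summand as a vector space'' genuinely upgrades to ``direct summand as an $\mathfrak{sl}_2$-module''; this is immediate, since $\mathsf{TAG}$ on morphisms, the Chevalley--Eilenberg differential, and therefore $H_k$ on morphisms, are all $\mathfrak{sl}_2$-equivariant. Equivalently, one may run the same argument with analytic functors: by Proposition~\ref{prop:Kunneth} and its super-analogue, it suffices to note that the analytic functor $H_k(\calT\calA\calG,\k)$, being valued in $\mathfrak{sl}_2$-modules, sends the split inclusion of a purely even $2$-dimensional space into any super space with even part of dimension $\ge2$ to a split inclusion of $\mathfrak{sl}_2$-modules, and then to identify $H_k(\calT\calA\calG,\k)(\k^2)$ with $H_k(\mathsf{TAG}(\Jord(x_1,x_2)),\k)$.
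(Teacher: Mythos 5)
Your argument is correct, and it effects exactly the same reduction as the paper: everything comes down to Proposition~\ref{prop:Jord20} for $\Jord(x_1,x_2)$. The only difference is the mechanism by which the two-even-generator case is isolated inside the big superalgebra. The paper uses the $\mathbb{N}^{m+n}$-multigrading of $\Jord(x_1,\ldots,x_m\mid y_1,\ldots,y_n)$, which passes to $\mathsf{TAG}$ and to its homology, and then looks at the graded components supported on the sub-monoid corresponding to $\{x_1,x_2\}$; you instead exhibit $\Jord(x_1,x_2)$ as a retract of the free superalgebra (include the generators, then kill all the others) and push the retract through $\mathsf{TAG}$ and $H_k(-,\k)$, both of which are $\mathfrak{sl}_2$-equivariant functors. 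These are two packagings of one and the same splitting: your retraction is precisely the projection onto the multidegree components supported on $\{x_1,x_2\}$, so the direct summand you produce is the graded piece the paper considers. What your version buys is that the identification of that piece with $H_k(\mathsf{TAG}(\Jord(x_1,x_2)),\k)$, and its $\mathfrak{sl}_2$-equivariance, come for free from functoriality rather than needing to be read off from the grading; what the paper's version buys is brevity and the fact that no discussion of morphisms is needed at all. One small point worth keeping explicit (you do, implicitly): the homology is locally finite as an $\mathfrak{sl}_2$-module (being graded with finite-dimensional components), so ``direct summand contains a trivial or adjoint constituent'' indeed forces the ambient module to contain one, which is what the failure of Conjecture~\ref{conj:conj1} requires; and the failure of Conjecture~\ref{conj:conj1} for $\Jord(x_1,x_2)$ itself is exactly what Proposition~\ref{prop:Jord20} asserts, so your citation is legitimate as stated.
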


\begin{proof}
The free Jordan superalgebra $\Jord(x_1,\ldots,x_m\mid y_1,\ldots,y_n)$ is $\mathbb{N}^{m+n}$-graded, and this grading induces a grading on the Lie superalgebra 
 \[
\mathrm{TAG}(\Jord(x_1,\ldots,x_m\mid y_1,\ldots,y_n)),     
 \]
as well as on its homology. Considering the graded components supported on $\mathbb{N}^I\subset \mathbb{N}^{m+n}$ for various subsets $I$ of the set of variables, we see that our assertion follows from the particular case $(m\mid n)=(2\mid 0)$, which is our Proposition \ref{prop:Jord20}.
\end{proof}

\begin{remark}
Our results do not shed light on the question as to whether Conjectures \ref{conj:conj1} and \ref{conj:conj2} hold for the free Jordan superalgebras $\Jord(x_1,\ldots,x_m\mid y_1,\ldots,y_n)$ with $m\le 1$. It would be interesting to determine if that is indeed the case, and in particular to describe the free two-generated Jordan superalgebras $\Jord(x\mid y)$ and $\Jord(\varnothing\mid y_1,y_2)$. These appear to be more complicated objects than the free algebra $\Jord(x_1,x_2\mid\varnothing)$. The reason for it is that the latter Jordan algebra is special (that is, isomorphic to a Jordan subalgebra of an associative algebra) while the two former are not (as the authors learned from I. Shestakov, there exist exceptional Jordan superalgebras generated by two elements, which can be chosen to be both odd, or to be odd and even).
\end{remark}

\subsection{Inner derivations of free Jordan algebras}
We conclude with indicating one question concerning free Jordan algebras that has a chance for a positive answer. Specifically, in \cite[Lemma 12]{MR4235202}, it is established that Conjecture \ref{conj:conj1} would imply that we have 
$\calB(\Jord(V))\cong\mathrm{Inner}(\Jord(V))$ for any choice of a vector space $V$. Additionally, in \cite[Corollary~2]{MR4235202}, it is shown that the same assertion holds for free special Jordan algebras (that is, Jordan subalgebras of free associative algebras equipped with the product $ab=\frac12(a\cdot b+b\cdot a)$): 
 \[
\calB(\mathrm{SJord}(V))\cong \mathrm{Inner}(\mathrm{SJord}(V)).
 \] 
As we mentioned in Section \ref{sec:2gen}, for $\dim(V)=2$ we have $\Jord(V)\cong \mathrm{SJord}(V)$, which means that in that case we have $\calB(\Jord(V))\cong\mathrm{Inner}(\Jord(V))$, even though \cite[Lemma 12]{MR4235202} is not applicable. This raises a natural question whether we always have 
 \[
\calB(\Jord(V))\cong\mathrm{Inner}(\Jord(V))     
 \]
even though it cannot be deduced from the (false) Conjecture \ref{conj:conj1}.

\printbibliography

\end{document}